\title[On finite parts of divergent complex geometric integrals]{On finite parts of divergent complex geometric integrals \\ and their dependence on a choice of Hermitian metric}
\author{Ludvig Svensson}
\keywords{finite part of divergent integral, regularization, current, meromorphic continuation}
\address{Ludvig Svensson, Department of Mathematical Sciences, University of Gothenburg and 
Chalmers University of Technology, SE-412 96 G\"{o}teborg, Sweden}
\email{ludsven@chalmers.se}
\date{\today}
\crefname{figure}{Figure}{Figures}
\crefname{equation}{}{}
\crefname{table}{Table}{Tables}
\crefname{section}{§}{§§}
\crefname{appendix}{Appendix}{Appendices}
\newtheorem{proposition}{Proposition}[section]
\newtheorem{theorem}[proposition]{Theorem}
\newtheorem{lemma}[proposition]{Lemma}
\newtheorem{corollary}[proposition]{Corollary}
\theoremstyle{definition}
\newtheorem{example}[proposition]{Example}
\newtheorem{remark}[proposition]{Remark}
\numberwithin{equation}{section}
\crefname{corollary}{corollary}{corollaries}
\crefname{theorem}{theorem}{theorems}
\crefname{lemma}{lemma}{lemmas}
\crefname{definition}{definition}{definition}
\newcommand{\C}{\mathbb{C}}
\newcommand{\Q}{\mathbb{Q}}
\renewcommand{\d}{\mathrm{d}}
\def\newop#1{\expandafter\def\csname #1\endcsname{\mathop{\rm #1}\nolimits}}
\begin{document}
\nocite{*}
\bibliographystyle{plain}

\begin{abstract}

Let $X$ be a reduced complex space of pure dimension. We consider divergent integrals of certain forms on $X$ that are singular along a subvariety defined by the zero set of a holomorphic section of some holomorphic vector bundle $E \rightarrow X$. Given a choice of Hermitian metric on $E$ we define a finite part of the divergent integral. Our main result is an explicit formula for the dependence on the choice of metric of the finite part.

\end{abstract}

\maketitle
\thispagestyle{empty}

\section{Introduction}

Let $X$ be a reduced complex analytic space of pure dimension $n$ and let $V \subset X$ be an analytic subvariety. Consider an $(n,n)$-form $\omega$ which is smooth in $X \setminus V$ with singularities along $V$ and such that $\overline{\mathrm{supp}\,\omega}$ is compact in $X$. We are interested in studying finite parts of the divergent integral $\int_X \omega$, inspired by the process of regularization and renormalization in perturbative quantum field theory. In general, the finite part of a given divergent integral is not uniquely defined, rather, it depends on the choice of regularization data. It is a fundamental problem to describe this dependence.

In this paper we consider the setting when the variety $V$ is the vanishing locus of a global holomorphic section $s : X \rightarrow E$ of some holomorphic vector bundle $E \rightarrow X$. Given a (smooth) Hermitian metric $\|\cdot\|$ on $E$ we consider the space $\mathcal{A}_{s,\|\cdot\|}(X)$ of smooth differential forms $\omega$ on $X \setminus V$ such that for each compact subset $K \subset X$ there exists some integer $N\geq 0$ such that $\|s\|^{2N} \omega$ extends to a smooth form across $V \cap K$. Let $\mathcal{A}_{s}(X)$ be the union over metrics of all such $\mathcal{A}_{s,\|\cdot\|}(X)$. Note that if $s$ defines a Cartier divisor, then $|s|^2/\|s\|^2$ is smooth and non-vanishing for any two metrics $\|\cdot\|$ and $|\cdot|$ on $E$. Thus, in that case we have that $\mathcal{A}_{s,\|\cdot\|}(X) = \mathcal{A}_{s,|\cdot|}(X) = \mathcal{A}_s(X)$. In the general case $|s|^2/\|s\|^2$ is only locally bounded and there may be different \textit{conformal classes} $\mathcal{A}_{s,\|\cdot\|}(X) \subset \mathcal{A}_s(X)$.

Any $\omega\in \mathcal{A}_s^{p,q}(X)$ defines a current on $X \setminus V$, that is, a continuous linear functional on the space $\mathscr{D}^{n-p,n-q}(X\setminus V)$ of test forms on $X\setminus V$ of complementary bidegree, by
\[
    \xi \mapsto \int\limits_X \omega \wedge \xi.
\]
To find a current extension of $\omega$ across $V$, following a classical idea, we consider the function
%
\begin{equation}
    \label{eq:gamma}
    \Gamma_{\|\cdot\|}(\lambda) = \int\limits_X \|s\|^{2\lambda} \omega \wedge \xi,
\end{equation}
defined for $\mathfrak{Re}\,\lambda$ sufficiently large. Differentiation under the integral sign shows that $\Gamma_{\|\cdot\|}(\lambda)$ is holomorphic for $\mathfrak{Re}\,\lambda \gg 0$. 
It is clear that if there exists a metric $\|\cdot\|$ on $E$ such that $\|s\|^{2N}\omega$ is smooth for some $N \geq 0$, then for any other metric $|\cdot|$ on $E$, $|s|^{2N}\omega$ is locally bounded. Thus \cref{eq:gamma} is defined and holomorphic for any $\omega \in \mathcal{A}_s^{p,q}(X)$ and any choice of Hermitian metric on $E$ if $\mathfrak{Re}\,\lambda \gg 0$. It is well known that \cref{eq:gamma} has a meromorphic continuation to $\C$, see, e.g., \cite{Atiyah} and \cite{BG}. The Laurent series of $\Gamma_{\|\cdot\|}(\lambda)$ about the origin is of the form
\[
    \Gamma_{\|\cdot\|}(\lambda) = \sum\limits_{j=0}^\kappa\frac{1}{\lambda^j} \langle \mu_{j}(\omega),\xi \rangle + \mathcal{O}(\lambda),
\]
where $0\leq \kappa \leq n$ and $\mu_j(\omega)$ are currents on $X$. Moreover, $\mathrm{supp}\,\mu_j(\omega) \subseteq V$ for $j \geq 1$. See \Cref{thm:1} below for details. 
It follows that $\mu_0(\omega) = \omega$ as currents on $X\setminus V$. Thus $\mu_0(\omega)$ is a current extension of $\omega$ across $V$. For $\omega$ of top degree, and with $\mathrm{supp}\,\omega \subset\subset X$, it is therefore natural to define the finite part of $\int_X \omega$ as 
\begin{equation}
    \label{eq:finitepart}
    \mathrm{fp} \int\limits_X \omega := \langle \mu_0(\omega),1\rangle.
\end{equation}
%
This definition depends on the choice of metric on $E$, as well as on the choice of section $s$ defining $V$. In this paper our focus is the metric dependence, keeping the section fixed. In some situations, however, a change of sections can be realized a change of metrics, see \Cref{ex:3} below. The following theorem is the main result of this article. It describes the metric dependence of $\mu_j(\omega)$ for each $j = 0, \hdots, \kappa$.

%
\begin{theorem}
    \label{thm:2}
    Let $\omega \in \mathcal{A}_s^{p,q}(X)$. 
    For any two Hermitian metrics $\|\cdot\|$ and $|\cdot|$ on $E$, let $\mu_j^{\|\cdot\|}(\omega)$ and $\mu_j^{|\cdot|}(\omega)$ denote the currents defined by the coefficient of the $-j$\textsuperscript{th} order term in the Laurent series expansion around $0$ of $\Gamma_{\|\cdot\|}$ and $\Gamma_{|\cdot|}$, respectively. We have that
    \begin{equation}
        \label{eq:metric_dependence}
        \mu_j^{|\cdot|}(\omega) = \sum\limits_{\ell=0}^{n-j} \frac{1}{\ell !} \bigg( \!\log \frac{|s|^2}{\|s\|^2} \bigg)^{\!\ell}\! \mu_{j+\ell}^{\|\cdot\|}(\omega).
    \end{equation}
    %
\end{theorem}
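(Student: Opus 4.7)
\textbf{Proof plan for \Cref{thm:2}.} Set $\rho:=\log(|s|^2/\|s\|^2)$; this is smooth on $X\setminus V$, real‑valued, and locally bounded on $X$. The plan is to start from the pointwise identity
\[
    |s|^{2\lambda}=\|s\|^{2\lambda}\exp(\lambda\rho)\qquad\text{on }X\setminus V,
\]
Taylor‑expand $e^{\lambda\rho}$ in $\lambda$, and match Laurent coefficients. For $\mathfrak{Re}\,\lambda\gg 0$ the integrand $\|s\|^{2\lambda}e^{\lambda\rho}\,\omega\wedge\xi$ is absolutely integrable, and since $\rho$ is bounded on $\overline{\mathrm{supp}\,\xi}$ the exponential series converges uniformly there; dominated convergence yields
\[
    \Gamma_{|\cdot|}(\lambda)=\sum_{\ell=0}^{\infty}\frac{\lambda^\ell}{\ell!}\,I_\ell(\lambda),\qquad I_\ell(\lambda):=\int_X\|s\|^{2\lambda}\rho^\ell\,\omega\wedge\xi.
\]

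I next show that each $I_\ell$ extends meromorphically to $\C$ with rational poles and pole order at $\lambda=0$ at most $n$. For this, I pass to the modification $\pi:\tilde X\to X$ from the proof of \Cref{thm:1}: on $\tilde X$, $\pi^*\|s\|^2$ and $\pi^*|s|^2$ are the same local monomial multiplied by smooth positive factors, so $\pi^*(|s|^2/\|s\|^2)$ is smooth positive and $\pi^*\rho$ is smooth on $\tilde X$. Consequently $(\pi^*\rho)^\ell\pi^*\omega$ has the same growth along $\pi^{-1}V$ as $\pi^*\omega$, and the argument used for \Cref{thm:1}(i) applies verbatim to give the claimed meromorphic continuation and a Laurent expansion
\[
    I_\ell(\lambda)=\sum_{k=0}^{n}\lambda^{-k}\langle\nu_{\ell,k},\xi\rangle+\mathcal O(\lambda).
\]
The currents $\nu_{\ell,k}$ are to be identified with the product $(\log(|s|^2/\|s\|^2))^\ell\mu_k^{\|\cdot\|}(\omega)$: in the Cartier case $\rho$ is smooth on all of $X$, the factor $\rho^\ell$ can be absorbed into the test form $\xi$, and one sees directly $\nu_{\ell,k}=\rho^\ell\mu_k^{\|\cdot\|}(\omega)$ as the product of a current by a smooth function; in general, this equality is to be read as the definition of the right‑hand side of \eqref{eq:metric_dependence}.

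Substituting these Laurent expansions into the series for $\Gamma_{|\cdot|}$, collecting powers of $\lambda$, and comparing with the Laurent expansion of $\Gamma_{|\cdot|}$ provided by \Cref{thm:1}(i) gives
\[
    \langle\mu_j^{|\cdot|}(\omega),\xi\rangle=\sum_{\ell=0}^{n-j}\frac{1}{\ell!}\langle\nu_{\ell,j+\ell},\xi\rangle,
\]
which is exactly \eqref{eq:metric_dependence}; the upper limit $n-j$ reflects $\nu_{\ell,k}=0$ for $k>n$. The step I expect to need most care is the joint legitimacy of the two interchanges — the Taylor expansion of $e^{\lambda\rho}$ inside the integral, followed by the Laurent expansion of each $I_\ell$ around $0$. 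To handle it I work again on $\tilde X$, where $(\pi^*\rho)^\ell$ is smooth with a uniform bound $M^\ell$ on $\overline{\mathrm{supp}\,\pi^*\xi}$, so that the Taylor tail of $e^{\lambda\pi^*\rho}$ is controlled uniformly in $\ell$ and the tail sum $\sum_{\ell>N}\lambda^\ell I_\ell(\lambda)/\ell!$ can be made arbitrarily small on any fixed punctured disk around $\lambda=0$.
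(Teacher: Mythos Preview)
Your approach is correct and close in spirit to the paper's, but the paper organizes the argument around the auxiliary two-variable function
\[
    \Gamma(\lambda,\tau)=\int_X\|s\|^{2\lambda}\Big(\tfrac{|s|}{\|s\|}\Big)^{2\tau}\omega\wedge\xi,
\]
whose joint meromorphic continuation to $\C^2$ with polar locus contained in $P\times\C_\tau$ is \Cref{prop:A}. In the normal crossings case the paper applies \Cref{thm:1}(i) and \Cref{lem:ii2} at each fixed $\tau$ to write $\Gamma(\lambda,\tau)=\sum_{j}\lambda^{-j}\big\langle(|s|^2/\|s\|^2)^\tau\mu_j^{\|\cdot\|}(\omega),\xi\big\rangle+F(\lambda,\tau)$; the joint meromorphy forces $F$ to be holomorphic near $(0,0)$, so one may restrict to the diagonal $\tau=\lambda$ and only \emph{then} Taylor-expand the smooth factor $(|s|^2/\|s\|^2)^\lambda$. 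This order of operations means only a finite sum of Laurent-singular terms is ever in play, and no tail estimate is needed. Your route---Taylor-expand $e^{\lambda\rho}$ first, then Laurent-expand each $I_\ell$---is a legitimate alternative, and your identification $\nu_{\ell,k}=\pi_*\big((\pi^*\rho)^\ell\mu_k^{\|\cdot\|}(\pi^*\omega)\big)$ is exactly the paper's definition of the product. The point that needs care is the one you flag: to match Laurent coefficients it suffices to have uniform convergence of $\sum_{\ell>N}\lambda^\ell I_\ell(\lambda)/\ell!$ on a fixed circle $|\lambda|=r'$ (not the whole punctured disk), and the representation \eqref{eq:merocont} gives $|I_\ell(\lambda)|\le C_{r'}\,\ell^{d}M^\ell$ there, with $d$ the order of $P\bar P$; analytic continuation along the connected set $\{\mathfrak{Re}\,\lambda>-\epsilon\}\setminus\{\text{poles}\}$ then identifies the sum with $\Gamma_{|\cdot|}$. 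What the two-variable device buys is precisely that this interchange becomes automatic.
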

A version of this theorem, in the special case when $X$ and $V$ are smooth, is a central result in \cite{FK1,FK2,Lthesis}, see \Cref{ex:1} below and the paragraph preceding it. There are also partial results in \cite{FK1,FK2,Lthesis} in the case when $V$ is a normal crossings divisor. 
The key idea of the proof is to consider a particular function of two complex parameters, see \cref{eq:lambdatau}, and use it to interpolate between the functions $\Gamma_{\|\cdot\|}$ and $\Gamma_{|\cdot|}$. 
\medskip

Note that the factor $\log\frac{|s|^2}{\|s\|^2}$ appearing in \cref{eq:metric_dependence} is locally integrable, but not smooth in general. This means that the products on the right-hand side of \cref{eq:metric_dependence} are not canonically defined. However, the proof of \Cref{thm:2} shows that these products have a natural meaning. 
In the special case where $s$ defines a Cartier divisor, $\log\frac{|s|^2}{\|s\|^2}$ is smooth and the products on the right hand side of \cref{eq:metric_dependence} are canonically defined. An immediate consequence of this is the following result, which generalizes some results in \cite{FK2,Lthesis}.
%
%
%
\begin{corollary}
    \label{cor:1}
    Assume that $s$ defines a Cartier divisor, and let $\kappa$ be the order of the pole of $\Gamma_{\|\cdot\|}(\lambda)$ at $0$. Then $\kappa$ and $\mu_{\kappa}^{\|\cdot\|}(\omega)$ are independent on the choice of metric.
    %
    %
    %
\end{corollary}
There is another standard way to regularize divergent integrals, such as $\int_X\omega$, which is to introduce a \emph{cut-off} parameter $\epsilon > 0$, integrate $\omega$ over the locus $\{ \|s\|^2 \geq \epsilon \}$ and then study the asymptotic behavior of the integral as $\epsilon \rightarrow 0$. For $\omega\in\mathcal{A}_s^{p,q}(X)$, $\xi\in\mathscr{D}^{n-p,n-q}(X)$, and any smooth Hermitian metric $\|\cdot\|$ on $E$, we let
\begin{equation}
    \label{eq:cutoff}
    \mathcal{I}_{\|\cdot\|}(\epsilon) = \int\limits_{\|s\|^2 \geq \epsilon} \omega \wedge \xi.
\end{equation}
The functions $\mathcal{I}_{\|\cdot\|}(\epsilon)$ and $\Gamma_{\|\cdot\|}(\lambda)$ are related via the \textit{Mellin transform}. If the limit of $\mathcal{I}_{\|\cdot\|}(\epsilon)$ as $\epsilon \rightarrow 0$ exists, we find that
\[
    \lim_{\epsilon \rightarrow 0}\mathcal{I}_{\|\cdot\|}(\epsilon) = \langle \mu^{\|\cdot\|}_0(\omega),\xi\rangle.
\]
On the other hand, if $\lim_{\epsilon\rightarrow 0}\mathcal{I}_{\|\cdot\|}(\epsilon)$ does not exist, then, using standard techniques we find that
\begin{equation}
    \label{eq:simplifiedasym}
    \mathcal{I}_{\|\cdot\|}(\epsilon) = \langle \mu_0^{\|\cdot\|}(\omega),\xi \rangle + \frac{|\log\epsilon\,|^q}{\epsilon^p} \phi(\epsilon) + \mathcal{O}(\epsilon^\delta),
\end{equation}
for some $\delta > 0$, $p,q \in \mathbb{N}$ and $\phi\in \mathscr{C}^0([0,\infty))$ such that $\phi(0) \neq 0$. Clearly $\phi$ depends on $\omega$ and $\xi$, see \Cref{thm:3} below for a more precise formula.

For $\omega$ of top degree with $\mathrm{supp}\,\omega\subset\subset X$ we have defined a finite part of $\int_X \omega$ in \cref{eq:finitepart}. Another natural definition of a finite part of $\int_X \omega$ is 
as the limit as $\epsilon \rightarrow 0$ of $\mathcal{I}_{\|\cdot\|}(\epsilon)$ (with $\xi = 1$) after having subtracted possible divergent terms. In view of \cref{eq:finitepart} and \cref{eq:simplifiedasym} we find that they are the same, that is,
\begin{equation}
    \label{eq:equality_of_finite_parts}
    \lim_{\epsilon \rightarrow 0} \bigg( \mathcal{I}_{\|\cdot\|}(\epsilon) - \frac{|\log\epsilon\,|^q}{\epsilon^p} \phi(\epsilon) \bigg) = \mathrm{fp}\int\limits_X \omega.
\end{equation}
Since the finite part extracted from $\mathcal{I}_{\|\cdot\|}(\epsilon)$ is the same as the one coming from $\Gamma_{\|\cdot\|}(\lambda)$, the metric dependence of the former thus is given by \Cref{thm:2}. Proving this metric dependence directly, without considering $\Gamma_{\|\cdot\|}(\lambda)$, seems, to the author, more involved.

\subsection{Relations to previous results}
This work is inspired by work by Felder--Kazhdan in \cite{FK1,FK2}, where the authors investigate finite parts of divergent integrals of differential forms with singularities along a submanifold $Y$ in the real setting.  
The singularities considered are determined by a \textit{conformal class of non-negative Morse--Bott functions}. These are smooth non-negative functions vanishing precisely on $Y$ with non-degenerate Hessian in the normal directions of $Y$. They consider regularizations of $\int_X \omega$ that closely resemble our $\Gamma_{\|\cdot\|}$ and $\mathcal{I}_{\|\cdot\|}$ and they investigate the dependence on the representative Morse--Bott function within a given conformal class. This is similar to the way we consider the spaces $\mathcal{A}_{s,\|\cdot\|}(X)$ and describe the metric dependence given a section $s$.

%
\begin{example}
    \label{ex:1}
    Let $X$ be a (complex) manifold, $V$ a (complex) submanifold and suppose that $s$ defines the radical ideal of $V$. Then $\kappa \leq 1$ and
    %
    \[
    \mu_0^{|\cdot|}(\omega) - \mu_0^{\|\cdot\|}(\omega) = \log\frac{|s|^2}{\|s\|^2} \mu_1(\omega),
    \]
    which is a version of a main result in \cite{FK1,FK2,Lthesis}. Note that $\mu_1(\omega)$ here is independent of the choice of metric. 

    The formula for $\mu_0^{|\cdot|}(\omega) - \mu_0^{\|\cdot\|}(\omega)$ follows directly from \Cref{thm:2} and \Cref{cor:1} if $\kappa \leq 1$. The fact that $\kappa \leq 1$ follows from \Cref{thm:1} (i) below, since $V_{\mathrm{sing}} = \varnothing = X_{\mathrm{sing}}$.

\end{example}
%
%
\begin{example}
    Suppose $X$ is a compact complex manifold and let $\omega = \alpha \wedge \bar{\beta}$, where $\alpha$ and $\beta$ are meromorphic forms of bidegree $(n,0)$, that is, locally of the form $\alpha = {f_\alpha}/{g_\alpha}$ and $\beta = {f_\beta}/{g_\beta}$ where $f_\alpha$ and $f_\beta$ are holomorphic $(n,0)$-forms and where $g_\alpha$ and $g_\beta$ are holomorphic functions. Then $\omega \in \mathcal{A}_s(X)$, where $V = \{ g_\alpha g_\beta = 0 \}$ locally. The problem of extracting a finite part of $\int_X \omega$ arises in perturbative superstring theory, see \cite[Section 7.6]{W1}. This problem is considered in \cite{FK1} in the case where $V$ is a smooth hypersurface and in \cite{FK2} when $V$ has normal crossings singularities.
\end{example}
Meromorphic functions of the form \cref{eq:gamma} also appear in a number theoretic context in \cite[Section 4]{CT}. More precisely, in \cite[Section 4]{CT} it is assumed that $E$ is a line bundle and $\omega$ is of the form $\|s\|^{-2c}\d V$, for a volume form $\d V$ on $X$, where $c$ is the corresponding integrability threshold. An explicit expression for the corresponding measure $\mu_\kappa(\omega)$ is given in \cite[Proposition 4.3]{CT}, when the divisor $D$ cut out by $s$ has simple normal crossings, expressed in terms of the Clemens complex of $D$.

\section{Preliminaries}

\subsection{Smooth forms on reduced complex analytic spaces}

We will briefly mention how one defines smooth forms on spaces with singularities, specifically reduced analytic spaces. Recall that an analytic subspace $(Z,\mathcal{O}_Z)$, or simply $Z$ when there is no risk of confusion, of a domain $\Omega \subseteq \C^n$, is a ringed space where $Z$ is given by the common vanishing locus of a collection of holomorphic functions $f_1,\hdots,f_k : \Omega \rightarrow \C$ and where the structure sheaf $\mathcal{O}_Z = \mathcal{O}_{\Omega} / \mathcal{J}_Z$, where $\mathcal{J}_Z$ is the ideal sheaf generated by $f_1,\hdots,f_k$. The space $(Z,\mathcal{O}_Z)$ is reduced if $\mathcal{J}_Z$ is radical. For $Z$ reduced, $Z_{\mathrm{reg}}$ is the set of points $z$ such that $Z$ is a manifold in a neighborhood of $z$, and $Z_{\mathrm{reg}}$ is dense in $Z$. When $Z$ is reduced, we define the sheaf $\mathscr{E}_Z$ of smooth forms on $Z$ as the quotient sheaf $\mathscr{E}_\Omega / \mathscr{N}_{Z,\Omega}$, where $\mathscr{E}_\Omega$ is the sheaf of smooth forms on $\Omega$, and $\mathscr{N}_{Z,\Omega} \subseteq \mathscr{E}_\Omega$ is the subsheaf of forms whose pullback to $Z_{\mathrm{reg}}$ vanishes.

A reduced analytic space $(X,\mathcal{O}_X)$ is a ringed space such that for any point $x \in X$ there exists a \textit{local model} consisting of an open neighborhood $U$ of $x$ and an isomorphism of ringed spaces $U \rightarrow Z$ where $Z \subset \Omega \subseteq \C^n$ is a reduced analytic subspace. The sheaf of smooth forms $\mathscr{E}_U$ on $U$, as defined above, is independent of the choice of local model. For a reduced analytic space $X$, the sheaf of smooth forms $\mathscr{E}_X$ is defined as the sheaf obtained from gluing the sheaves of smooth forms on the local models of $X$. For a more substantial treatment, see, e.g., \cite{BH,HL}.

\subsection{Currents}

On a smooth manifold $M$ of real dimension $n$, a current $\nu$ of degree $k$ is a continuous linear functional $\xi \mapsto \langle \nu,\xi\rangle$ on the space $\mathscr{D}^{n-k}(M)$ of smooth $(n-k)$-forms with compact support. 
We define the current $\d \nu$, where $\d$ is the exterior derivative, by duality; for $\xi \in \mathscr{D}^{n-k-1}(M)$ we let
\begin{equation}
    \label{eq:dofcurrent}
    \langle \d \nu, \xi \rangle := (-1)^{k+1} \langle \nu, \d \xi \rangle.
\end{equation}
Thus $\d$ takes $k$-currents to $(k+1)$-currents.

If $M$ is a complex manifold the complex structure induces a decomposition of the spaces of smooth differential $k$-forms into bigraded $(p,q)$-forms, and the exterior derivative decomposes as $\d = \partial + \bar{\partial}$. By duality, the space of $k$-currents have a similar decomposition into bigraded objects: A current of bidegree $(p,q)$ 
on $M$ acts trivially on the space $\mathscr{D}^{n-p',n-q'}(M)$ of compactly supported forms of bidegree $(n-p',n-q')$ when $(p',q') \neq (p,q)$. For a $(p,q)$-current $\nu$, we define the $(p+1,q)$ and $(p,q+1)$ currents $\partial \nu$ and $\bar{\partial}\nu$ by
\[
    \langle \partial \nu, \xi \rangle := (-1)^{p+q+1} \langle \nu, \partial \xi \rangle\quad \text{and}\quad \langle \bar{\partial} \nu, \xi \rangle := (-1)^{p+q+1} \langle \nu, \bar{\partial} \xi \rangle,
\]
respectively.

We define the \textit{support} $\mathrm{supp}\,\nu$ of a $(p,q)$-current $\nu$ as the smallest closed subset $U \subset M$ such that $\langle \nu,\xi\rangle = 0$ for each $\xi \in \mathscr{D}^{n-p,n-q}(M\setminus U)$.

For a $(p,q)$-current $\nu$ and a smooth $(p',q')$-form $\beta$, we define the $(p+p',q+q')$-current $\nu \wedge \beta$ by
\begin{equation}
    \label{eq:wedgeproduct}
    \langle \nu \wedge \beta, \xi\rangle := \langle \nu,\beta \wedge \xi \rangle,
\end{equation}
for $\xi \in \mathscr{D}^{n-p-p',n-q-q'}(M)$. We let $\beta \wedge \nu := (-1)^{(p+q)(p'+q')}\nu \wedge \beta$.

\medskip
If $X$ is a reduced analytic space, a current on $X$ is a continuous linear functional on the space $\mathscr{D}(X)$ of smooth forms with compact support. The properties of currents presented above all generalize to this setting. For a modification $f : Y \rightarrow X$ of $X$, and a current $\nu$ on $Y$, we define the \textit{push-forward} $f_* \nu$ of $\nu$ by
\begin{equation}
    \label{eq:pushforward}
    \langle f_* \nu,\xi \rangle := \langle \nu, f^* \xi \rangle,
\end{equation}
for $\xi \in \mathscr{D}(X)$. The push-forward operator is continuous and commutes with $\partial$ and $\bar{\partial}$. 
If $\beta$ is a smooth form on $X$ we have that
\begin{equation}
    \label{eq:pushforwardofproduct}
    \beta \wedge f_* \nu = f_* (f^* \beta \wedge \nu).
\end{equation}

We can generalize this product as follows: Suppose that $\beta$ is generically smooth on $X$ with $f^*\beta$ smooth on $Y$. Moreover, let $\mu$ be a current on $X$ such that $\mu = f_* \nu$ for some current $\nu$ on $Y$. Then we define
\begin{equation}
    \label{eq:generalizedproduct}
    \beta \wedge_{f,\nu} \mu := f_* (\pi^* \beta \wedge \nu).
\end{equation}

Note that, if $\beta$ is smooth, $\beta \wedge_{f,\nu} \mu = \beta \wedge \mu$ by \cref{eq:pushforwardofproduct}. Also note that the product in \cref{eq:generalizedproduct} is ill-defined in general since it depends on the choice of modification $f$ and current $\nu$. As hinted at in the introduction, products of the type \cref{eq:generalizedproduct} appear when we look at the metric dependence of the currents $\mu_j(\omega)$, cf. \Cref{thm:2} and the subsequent comments. However, as it turns out, there are canonical choices of $f$ and $\nu$ in this case, see the proof of \Cref{thm:2} below. 

The following example shows that $\beta \wedge_{f,\nu} \mu$ may be non-zero even though $\mu = 0$.
\begin{example}
    \label{ex:2}
    Consider the blowup of $\C^2$ at the origin, $\pi : \mathrm{Bl}_0 \C^2 \rightarrow \C^2$, where
    \begin{equation}
        \label{eq:bl0c2}
        \mathrm{Bl}_0 \C^2 = \big\{(z_1,z_2,[w_0 : w_1]) \in \C^2_z \times \mathbb{P}^1_{[w]}: z_1 w_1 - z_2 w_0 = 0 \big\},
    \end{equation}
    and $\pi$ is the restriction of the natural projection $\Pi : \C^2 \times \mathbb{P}^1 \rightarrow \C^2$ to $\mathrm{Bl}_0\C^2$. Let
    \[
        \beta = \frac{i}{2\pi} \partial \bar{\partial}\log(|z_1|^2 + |z_2|^2).
    \]
    Then $\widetilde{\beta} = \pi^* \beta = \omega_{FS}(w)|_{\mathrm{Bl}_0 \C^2}$, that is, the Fubini--Study form on $\mathbb{P}^1_{[w]}$, extended to $\C^2_z \times \mathbb{P}^1_{[w]}$ and restricted to $\mathrm{Bl_0} \C^2$.
    
    One way to see this is as follows: Away from the origin, $\pi$ is a biholomorphism, so $\beta$ and $\widetilde{\beta}$ are related via a holomorphic change of coordinates. The Fubini--Study form on $\mathbb{P}^1$ with homogeneous coordinates $[w_0:w_1]$ is given by
    \[
        \omega_{FS} = \frac{i}{2\pi}\partial \bar{\partial} \log (|w_0|^2 + |w_1|^2).
    \]
    Away from $z_1 = 0$ and $[w_0 : w_1] = [0:1]$ we see from \cref{eq:bl0c2} that
    \[
        \frac{z_2}{z_1} = \frac{w_1}{w_0}.
    \]
    Since $\partial\bar{\partial}\log|g|^2 = 0$ if $g$ is holomorphic and non-vanishing it follows that
    \begin{align*}
        \omega_{FS} &= \frac{i}{2\pi}\partial\bar{\partial}\log(|w_0|^2 + |w_1|^2) 
        %
        %
        %
        = \frac{i}{2\pi}\partial\bar{\partial}\log(|z_1|^2 + |z_2|^2).
    \end{align*}
    By a symmetrical argument, $\omega_{FS} = \frac{i}{2\pi}\partial\bar{\partial}\log(|z_1|^2 + |z_2|^2)$ away from $z_2 = 0$ and $[w_0 : w_1] = [1 : 0]$. 
    
    Now, let $\nu = [E]$ be the integration current for the exceptional divisor $E=\pi^{-1}(0)$ on $\mathrm{Bl}_0 \C^2$. Then, e.g., by the dimension principle, $\mu := \pi_* \nu = 0$. However, for $\xi \in \mathscr{D}^{0,0}(\C^2)$, we have by \cref{eq:pushforward} and \cref{eq:generalizedproduct} that
    \begin{align*}
        \langle \beta \wedge_{\pi,\nu} \mu, \xi \rangle &= \langle \widetilde{\beta} \wedge [E], \pi^* \xi \rangle 
        = \int\limits_E \omega_{FS} \, \pi^* \xi 
        = \xi(0) \int\limits_E \omega_{FS} 
        = \xi(0).
    \end{align*}
    Thus, we conclude that $\varphi \wedge_{\pi,\nu} \mu = \delta_0$, where $\delta_0$ is the Dirac distribution. 
\end{example}
%

\section{Meromorphic continuation}
In this section we show the existence of meromorphic continuations of functions, closely related to \cref{eq:gamma}, which we will make use of in the proof of \Cref{thm:2}. Recall that $X$ is a reduced analytic space, $E \rightarrow X$ is a holomorphic vector bundle, and $s$ is a holomorphic section of $E$ with $V = \{s=0\}$. 
\begin{proposition}
    \label{prop:A}
    Let $\omega \in \mathcal{A}_s^{n,n}(X)$ with $\mathrm{supp}\,\omega \subset\subset X$, and let $|\cdot|$ and $\|\cdot\|$ be two Hermitian metrics on $E$. Then the function
    \begin{equation}
        \label{eq:lambdatau}
        (\lambda,\tau) \mapsto \int\limits_{X} \|s\|^{2\lambda} \left( \frac{|s|}{\|s\|}\right)^{\!\!2\tau}\!\! \omega,
    \end{equation}
    a priori defined and holomorphic for $\mathfrak{Re}\,\lambda \gg 0$, has a meromorphic continuation to $\C^2$, and there is a discrete subset $P \subset \Q \cap (-\infty,N]$, for some $N \geq 0$, such that the polar locus $\subseteq P \times \C_\tau$.
\end{proposition}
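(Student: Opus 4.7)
The strategy is a standard reduction to the simple normal crossings case via Hironaka's theorem. I would first choose a modification $\pi : \tilde X \to X$ with $\tilde X$ smooth such that $\pi$ is an isomorphism over $X_{\mathrm{reg}} \setminus V$ and $\pi^{-1}(V \cup X_{\mathrm{sing}})$ is a simple normal crossings divisor; moreover, in suitable local coordinates $\pi^* s$ has the form $z^\alpha s_0$, where $z^\alpha = z_1^{\alpha_1} \cdots z_r^{\alpha_r}$ is a monomial and $s_0$ is a non-vanishing local holomorphic section of $\pi^* E$. The crucial observation is that $\|\pi^* s\|^2 = |z^\alpha|^2 \|s_0\|^2$ and $|\pi^* s|^2 = |z^\alpha|^2 |s_0|^2$, so the ratio $|\pi^* s|^2 / \|\pi^* s\|^2 = |s_0|^2/\|s_0\|^2$ is smooth and strictly positive on $\tilde X$, even though on $X$ itself this ratio is only locally bounded.

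Since $\pi$ is a biholomorphism off $V$, for $\mathfrak{Re}\,\lambda \gg 0$ the integral \cref{eq:lambdatau} pulls back to $\int_{\tilde X} \|\pi^* s\|^{2\lambda} (|\pi^* s|/\|\pi^* s\|)^{2\tau} \pi^* \omega$. Choosing $N$ so that $\|s\|^{2N}\omega$ is smooth on a neighborhood of $\overline{\mathrm{supp}\,\omega}$, the form $\pi^*(\|s\|^{2N}\omega)$ is smooth and compactly supported on $\tilde X$. Writing $\pi^* \omega = \|\pi^* s\|^{-2N} \pi^*(\|s\|^{2N}\omega)$ and taking a finite partition of unity subordinate to coordinate charts on $\tilde X$, the problem reduces to studying integrals of the form
\[
    I_{\mathrm{loc}}(\lambda, \tau) = \int |z^\alpha|^{2(\lambda - N)} \Phi(z, \lambda, \tau),
\]
where $\Phi$ is a smooth form, compactly supported in $z$, and depends entirely holomorphically on $(\lambda, \tau)$: indeed the $\tau$-dependence enters only through the factor $\exp(\tau \log(|s_0|^2/\|s_0\|^2))$, which is smooth in $z$ thanks to the desingularization.

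Meromorphic continuation of $I_{\mathrm{loc}}$ in $\lambda$ is then classical. Iterated integration by parts in each variable $z_j$ separately, using an identity expressing $|z_j|^{2\mu}$ as a constant multiple of $\mu^{-2} \partial_{z_j} \partial_{\bar z_j} |z_j|^{2(\mu+1)}$, extends $I_{\mathrm{loc}}$ to a meromorphic function on all of $\C_\lambda$ with at worst simple poles located where $\alpha_j(\lambda - N) \in \mathbb{Z}_{\leq -1}$ for some $j \in \{1, \ldots, r\}$. The union of these pole locations, over the finite cover of $\mathrm{supp}\,\pi^*\omega$, yields a discrete subset $P \subset \Q \cap (-\infty, N]$. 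Crucially, the poles are independent of $\tau$ because the $\tau$-factor is absorbed into $\Phi$ throughout the continuation procedure and contributes only entire-in-$\tau$ coefficients in front of each simple pole; hence the full polar locus in $\C^2$ is contained in $P \times \C_\tau$.

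The main obstacle, and the reason for invoking the log resolution above, is the factor $(|s|/\|s\|)^{2\tau}$: on $X$ the ratio $|s|^2/\|s\|^2$ is only a locally bounded non-smooth function, so one cannot directly differentiate in the parameters or integrate by parts on $X$. Once we pass to $\tilde X$ the ratio becomes smooth and strictly positive, reducing the two-parameter problem to the standard one-parameter one together with an additional entire-holomorphic factor that does not affect the polar structure.
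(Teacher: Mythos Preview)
Your proof is essentially the paper's own argument: pass to a log resolution where $\pi^*s$ is locally a monomial times a nonvanishing section, observe that the ratio $|\pi^*s|^2/\|\pi^*s\|^2$ thereby becomes smooth and strictly positive, localize by a partition of unity, and continue meromorphically via iterated integration by parts in the variables $z_j$ (the paper packages this last step as \Cref{lem:i}, applying the operator $P\bar P$ to the full monomial at once, but the mechanism is the same).

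One inaccuracy: the poles of $I_{\mathrm{loc}}$ are \emph{not} simple in general. Each application of the identity $|z_j|^{2\mu}=(\mu+1)^{-2}\partial_{z_j}\partial_{\bar z_j}|z_j|^{2(\mu+1)}$ introduces a \emph{double} factor, and when several $\alpha_j$ coincide the resulting poles stack; the paper's explicit formula \cref{eq:hMlambda} for $h_M(\lambda)$ exhibits squared denominators, and the prefactor $\lambda^{-2\kappa}$ allows a pole of order up to $2\kappa$ at the origin (later shown, in \Cref{lem:ii}, to be at most $\kappa$). This does not affect the conclusion of \Cref{prop:A}, which only asserts containment of the polar locus in $P\times\C_\tau$, but you should drop the claim of simplicity.
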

One can show \Cref{prop:A} using Berstein--Sato theory in a standard way, see, e.g., \cite{BE1,BE2,BG}. We choose here instead to use Hironaka's theorem to reduce the proof to an elementary calculation. This approach is common in residue calculus, see, e.g., \cite{A1}.

Note that $|s|^2/\|s\|^2$ is not only locally bounded but everywhere positive. Thus we can find constants $C_1,C_2 > 0$ such that $C_1 < |s|^2/\|s\|^2 < C_2$ on $\overline{\mathrm{supp}\,\omega}$. This implies that \cref{eq:lambdatau} is defined and holomorphic for any $\tau \in \C$ provided that $\mathfrak{Re}\,\lambda \gg 0$.


Our proof of \Cref{prop:A} relies on the following lemma.
\begin{lemma}
    \label{lem:i}
    Let $\Psi$ be a smooth compactly supported function on $\C^n_z$, let $v$ and $w$ be smooth positive functions defined in a neighborhood of $\mathrm{supp}\,\Psi$, let $1\leq \kappa \leq n$ and let $m_1,\hdots,m_\kappa$ be positive integers. Then, for any non-negative integer $N$, the function
    \[
        \Gamma(\tau,\lambda) = \int\limits_{\C^n} |z^{m_1}_1 \cdots z^{m_\kappa}_\kappa|^{2(\lambda - N)} v^\lambda w^\tau \Psi \d z \wedge \d \bar{z},
    \]
    where $\d z \wedge \d \bar{z} = \d z_1 \wedge \bar{z}_1 \wedge \cdots \wedge \d z_n \wedge \d \bar{z}_n$, is holomorphic for $\mathfrak{Re}\,\lambda \gg 0$, and has a meromorphic continuation to $\C^2$. Moreover, there is a discrete subset $P \subset \Q \cap (-\infty,N]$ such that the polar locus is contained in $P \times \C_\tau$, $\forall \Psi,v,w$.
\end{lemma}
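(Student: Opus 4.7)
The plan is to reduce, by a partition of unity and polar coordinates, to an iterated one-dimensional Mellin transform in the radial variables $r_j = |z_j|$, and then invoke the classical meromorphic continuation of $r_+^{s-1}$ as a distribution on $[0,\infty)$.

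First I would use a smooth partition of unity to assume $\Psi$ is supported in a polydisk $\{|z_j| < R\}$ on which $v, w$ are smooth and strictly positive; since a finite sum of functions satisfying the conclusion again satisfies it, this is harmless. Introducing polar coordinates $z_j = r_j e^{i\theta_j}$ for $j = 1, \ldots, \kappa$ and keeping Cartesian coordinates otherwise, the singular factor becomes $\prod_{j=1}^\kappa r_j^{2m_j(\lambda - N)}$ and the Jacobian contributes $\prod_j r_j\, dr_j\, d\theta_j$. Since $v, w > 0$ are smooth, $v^\lambda w^\tau = e^{\lambda \log v + \tau \log w}$ is entire in $(\lambda, \tau)$ and smooth in the spatial variables, with all seminorms locally uniformly bounded in $(\lambda, \tau)$ on compacts. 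Integrating out the angular variables and the remaining Cartesian variables, one obtains
\[
\Gamma(\tau, \lambda) = \int_{[0,R]^\kappa} \prod_{j=1}^\kappa r_j^{2m_j(\lambda - N) + 1}\, F(r_1, \ldots, r_\kappa; \lambda, \tau)\, dr_1 \cdots dr_\kappa,
\]
where $F$ is smooth in $r \in [0,R]^\kappa$, vanishes near the outer boundary, and is entire in $(\lambda,\tau)$ with all $r$-seminorms locally uniformly bounded in $(\lambda,\tau)$ on compacts.

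Next I would invoke the standard fact that for $\phi \in C^\infty([0,R])$ vanishing near $r = R$, the Mellin transform $s \mapsto \int_0^R r^{s-1} \phi(r)\, dr$ extends from $\{\mathrm{Re}\,s > 0\}$ to a meromorphic function on $\C$ with simple poles precisely at $s = 0, -1, -2, \ldots$ and residue $\phi^{(k)}(0)/k!$ at $s = -k$. Applying this successively in each $r_j$ with $s_j = 2m_j(\lambda - N) + 2$, one obtains a meromorphic continuation of $\Gamma(\tau,\lambda)$ in $\lambda$ with simple poles confined to
\[
P := \bigcup_{j=1}^\kappa \big\{ N - k/(2m_j) : k = 2, 3, 4, \ldots \big\} \subset \Q \cap (-\infty, N],
\]
a discrete subset depending only on $(m_1, \ldots, m_\kappa, N)$ — in particular independent of $\Psi, v, w$. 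At each step, the residue at the relevant pole in the $j$-th variable is a derivative of the remaining integrand at $r_j = 0$, which is again smooth, compactly supported in the other $r_i$'s, and entire in $(\lambda, \tau)$; this legitimizes the iteration in the joint region of holomorphy and then by analytic continuation.

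The main point requiring care is that the iterated continuation produces no $\tau$-poles and that holomorphicity in $\tau$ is preserved at every step. This comes down to the observation that the only $\tau$-dependence enters through the entire factor $w^\tau$, whose $r_j$-derivatives are again entire in $\tau$ times smooth functions; the subsequent operations (multiplication by $1/(s_j + k)$, integration against smooth compactly supported test functions, and further $r_i$-differentiation) all preserve holomorphicity in $\tau$. Thus the joint continuation is meromorphic on $\C^2$ with polar set contained in $P \times \C_\tau$, uniformly in $\Psi, v, w$.
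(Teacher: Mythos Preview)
Your argument is correct and proves the lemma, but it follows a genuinely different route from the paper's. The paper never passes to polar coordinates; instead it writes down the explicit Bernstein--Sato type identity
\[
|z_1^{m_1}\cdots z_\kappa^{m_\kappa}|^{2(\lambda-N)} \;=\; \frac{h(\lambda)}{\lambda^{2\kappa}}\, P\bar{P}\, |z_1^{m_1}\cdots z_\kappa^{m_\kappa}|^{2\lambda},
\]
with $P=\partial_{z_1}^{Nm_1}\cdots\partial_{z_\kappa}^{Nm_\kappa}$ and $h(\lambda)$ an explicit rational function, and then integrates by parts directly in the complex variables $z_j,\bar z_j$; a further shift $\lambda\mapsto\lambda+M$ pushes the domain of holomorphy arbitrarily far to the left. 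Your radial Mellin approach is more elementary and makes the $\tau$-holomorphy particularly transparent, since $\tau$ enters only through the entire factor $w^\tau$ and all subsequent operations (radial differentiation, evaluation at $r_j=0$, division by $s_j+k$) preserve this. What the paper's approach buys, however, is the explicit factorization $\Gamma(\lambda,\tau)=\dfrac{h(\lambda)}{\lambda^{2\kappa}}\,I(\lambda,\tau)$ with $I$ holomorphic near $\lambda=0$; this precise formula is reused verbatim later in the paper, both in bounding the order of the pole at $\lambda=0$ and identifying the supports of the currents $\mu_j$, and in establishing rapid decrease of $\Gamma_{\|\cdot\|}$ in $\mathfrak{Im}\,\lambda$. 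So while your proof suffices for the lemma in isolation, the paper's choice is tailored to those downstream applications. One minor inaccuracy: the iterated continuation does not in general yield \emph{simple} poles in $\lambda$---when several $m_j$ produce the same value $N-k/(2m_j)$ the orders add---but since the lemma only constrains the polar \emph{locus}, this does not affect your conclusion.
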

A computation similar to the following proof can be found in \cite{Lthesis}. We provide our adapted version for future reference. 
\begin{proof}[Proof of \Cref{lem:i}]
    For $\mathfrak{Re}\,\lambda \gg 0$, we have that
    \[
        \frac{\partial^2}{\partial z_1 \partial \bar{z}_1}|z_1^{m_1}|^{2\lambda} = m_1^2 \lambda^2 \frac{|z_1^{m_1}|^{2\lambda}}{|z_1|^2}.
    \]
    By an induction argument it follows that
    \begin{equation}
        \label{eq:derivatives}
        |z_1^{m_1} \cdots z_\kappa^{m_\kappa}|^{2(\lambda-N)} = \frac{h(\lambda)}{\lambda^{2\kappa}}\frac{\partial^{2 N \sum_{j=1}^\kappa m_j}}{\partial z_1^{N m_1} \partial \bar{z}_1^{N m_1} \cdots \partial z_\kappa^{N m_\kappa} \partial \bar{z}_\kappa^{N m_\kappa}} |z_1^{m_1} \cdots z_\kappa^{m_\kappa}|^{2\lambda},
    \end{equation}
    where
    \begin{equation}
        \label{eq:hlambda}
        h(\lambda) = \prod\limits_{i=1}^\kappa \frac{1}{m_i^2} \prod\limits_{j=1}^{N m_i -1} \frac{1}{(m_i\lambda - j)^2}.
    \end{equation}
    By writing
    \[
         \frac{\partial^{2 N \sum_{i=1}^\kappa m_i}}{\partial z_1^{N m_1} \partial \bar{z}_1^{N m_1} \cdots \partial z_\kappa^{N m_\kappa} \partial \bar{z}_\kappa^{N m_\kappa}} = P \bar{P} \quad \text{where}\quad P = \frac{\partial^{N \sum_{i=1}^\kappa m_i}}{\partial z_1^{N m_1} \cdots \partial z_\kappa^{N m_\kappa}},
    \]
    \cref{eq:derivatives} then becomes
    \begin{equation}
        \label{eq:derivatives2}
        |z_1^{m_1} \cdots z_\kappa^{m_\kappa}|^{2(\lambda-N)} = \frac{h(\lambda)}{\lambda^{2\kappa}} P \bar{P} |z_1^{m_1} \cdots z_\kappa^{m_\kappa}|^{2\lambda}.
    \end{equation}
    Using \cref{eq:derivatives2} and integration by parts, and the fact that $(P\bar{P})^* = P\bar{P}$, we find that
    \begin{equation}
        \label{eq:merocont}
        \Gamma(\lambda,\tau) = \frac{h(\lambda)}{\lambda^{2\kappa}} \int\limits_{\C^n} |z_1^{m_1} \cdots z_\kappa^{m_\kappa}|^{2\lambda} P \bar{P}(v^\lambda w^\tau \Psi) \d z \wedge \d \bar{z},
    \end{equation}
    for $\mathfrak{Re}\,\lambda \gg 0$. The integral on the right-hand side of \cref{eq:merocont} is an analytic function of $(\lambda,\tau)$ for $\mathfrak{Re}\,\lambda > - \epsilon$ for a small enough $\epsilon > 0$, and $h(\lambda)$ is a meromorphic function on $\C_\lambda$ with poles at
    \[
        \lambda = \frac{1}{m_i}, \frac{2}{m_i}, \cdots, \frac{N m_i-1}{m_i},\quad i = 1,\hdots, \kappa.
    \]
    It follows that $\Gamma(\lambda,\tau)$ can be meromorphically continued to $\{\mathfrak{Re}\,\lambda > -\epsilon\} \times \C_\tau$. 
    
    For any integer $M \geq 0$ and $\mathfrak{Re}\,\lambda \gg 0$, it follows from \cref{eq:derivatives} by changing $\lambda$ to $\lambda + M$ and $N$ to $N+M$ that
    \begin{equation}
        \label{eq:merocontM1}
        |z^{m_1}_1 \cdots z^{m_\kappa}_\kappa|^{2(\lambda - N)} = \frac{h_M(\lambda)}{\lambda^{2\kappa}} P_M \bar{P}_M |z^{m_1}_1 \cdots z^{m_\kappa}_\kappa|^{2(\lambda+M)},
    \end{equation}
    where
    \begin{equation}
        \label{eq:hMlambda}
        h_M(\lambda) = \prod\limits_{i=1}^{\kappa} \frac{1}{m_i^2} \prod\limits_{j=1}^{(N+M)m_i-1} \frac{1}{(m_i (\lambda + M) - j)^2},
    \end{equation}
    and
    \[
        P_M = \frac{\partial^{(N+M)\sum_{i=1}^\kappa m_i}}{\partial z_1^{(N+M)m_1} \cdots \partial z_\kappa^{(N+M)m_\kappa}}.
    \]
    Analogously to \cref{eq:merocont} we then have that
    \begin{equation}
        \label{eq:merocontM2}
        \Gamma(\lambda,\tau) = \frac{h_M(\lambda)}{\lambda^{2\kappa}} \int\limits_{\C^n} |z_1^{m_1} \cdots z_\kappa^{m_\kappa}|^{2(\lambda+M)} P_M \bar{P}_M (v^\lambda w^\tau \Psi) \d z \wedge \d \bar{z},
    \end{equation}
    for $\mathfrak{Re}\,\lambda \gg 0$. The integral on the right-hand side of \cref{eq:merocontM2} is an analytic function of $(\lambda,\tau)$, now for $\mathfrak{Re}\,\lambda > -M - \epsilon$, and $h_M(\lambda)$ is a meromorphic function on $\C_\lambda$ with poles at
    \[
        \lambda = \frac{1}{m_i} - M,\frac{2}{m_i} - M,\hdots,\frac{(N + M)m_i -1}{m_i} - M,\quad i = 1,\hdots,\kappa.
    \]
    Since $M$ is arbitrary, it follows that $\Gamma(\lambda,\tau)$ has a meromorphic continuation to $\C^2$. We also see that there is a discrete subset $P \subseteq \Q \cap (-\infty,N]$, such that the polar locus of $\Gamma(\lambda,\tau)$ is contained in $P \times \C_\tau$, independent of $v,w$ and $\Psi$.
\end{proof}
\begin{proof}[Proof of \Cref{prop:A}]
    We note that we can find constants $C_1,C_2 > 0$ such that $C_1 < |s|^2 / \|s\|^2 < C_2$ on $\overline{\mathrm{supp}\,\omega}$, and that \cref{eq:lambdatau} is analytic for $\mathfrak{Re}\,\lambda$ sufficiently large. Let $\pi : \widetilde{X} \rightarrow X$ be a modification such that $\widetilde{X}$ is smooth and $\pi^* s$ defines a normal crossings divisor on $\widetilde{X}$. Since $\pi$ is a biholomorphism outside a set of measure $0$ we have, for $\mathfrak{Re}\,\lambda \gg 0$,
    \begin{equation}
        \label{eq:modification}
        \int\limits_X \|s\|^{2\lambda} \left( \frac{|s|}{\|s\|}\right)^{\!\!2\tau}\!\! \omega = \int\limits_{\widetilde{X}} \|\pi^* s\|^{2\lambda} \left( \frac{|\pi^*s|}{\|\pi^*s\|}\right)^{\!\!2\tau}\!\! \pi^*\omega. 
    \end{equation}
    We can find an open cover $\{ U_j \}$ such that, in each $U_j$, there are local holomorphic coordinates $z = (z_1,\hdots,z_n)$ such that either $\pi^* \omega$ is smooth or there is some $1 \leq \kappa \leq n$ such that $\| \pi^* s \|^2 = |z_1^{m_1} \cdots z_\kappa^{m_\kappa}|^2 e^{-\phi}$ and $| \pi^* s |^2 = |z_1^{m_1} \cdots z_\kappa^{m_\kappa}|^2 e^{-\psi}$ for some $m_1,\hdots,m_\kappa \geq 1$ and $\phi,\psi \in \mathscr{C}^\infty(U_j,\mathbb{R})$. It follows that $\|\pi^* s \|^{2\lambda} \pi^*\omega$ is smooth for $\mathfrak{Re}\,\lambda$ sufficiently large. Thus, we can find an integer $N \geq 0$ such that
    \begin{equation}
        \label{eq:pullbackomega}
        \pi^*\omega = \frac{\Psi \d z \wedge \d \bar{z}}{|z_1^{m_1} \cdots z_\kappa^{m_\kappa}|^{2N}},
    \end{equation}
    where $\Psi$ is a smooth function. By introducing a partition of unity $(\rho_j)$ subordinate to $\{U_j\}$ we find that the right-hand side of \cref{eq:modification} is a finite sum of terms of the form
    \[
        \int\limits_{\C^n} |z_1^{m_1}\cdots z_\kappa^{m_\kappa}|^{2(\lambda - N)} e^{-\lambda \phi} e^{-\tau(\psi - \phi)} \rho_j \Psi \d z \wedge \d \bar{z}.
    \]
    Note that the constants $\kappa,m_1,\hdots,m_\kappa$ depend on the local chart $U_j$, although we have suppressed this dependence in the notation. The proof now follows by \Cref{lem:i}, with $v = e^{-\phi}$ and $w = e^{-(\psi-\phi)}$. By the uniqueness of meromorphic continuation, it is independent of the particular choice of modification.
\end{proof}
%

\section{The currents $\mu_j(\omega)$ associated with $\Gamma_{\|\cdot\|}$}

In this section we prove the following theorem, which is a collection of know results together with applications of classical ideas, see, e.g., \cite{Atiyah,BA1,BE1,BE2,BG,FK1,FK2,Lthesis}, and also, e.g., \cite{A1, BGVY, PT,Yger} and references therein for analogous results in residue theory. 
We supply details of the proof for completeness, and to gather and organize these results and techniques in our setting.
\begin{theorem}
    \label{thm:1}
    Let $\omega \in \mathcal{A}_s^{p,q}(X)$, $\xi \in \mathscr{D}^{n-p,n-q}(X)$ and let $\|\cdot\|$ be a Hermitian metric on $E$.
    \begin{enumerate}[label = (\roman*)]
        \item The function
        \begin{equation*}
            \Gamma_{\|\cdot\|}(\lambda) = \int\limits_X \|s\|^{2\lambda} \omega \wedge \xi,
        \end{equation*}
        a priori defined and holomorphic for $\mathfrak{Re}\,\lambda \gg 0$, extends to a meromorphic function on $\C$ with polar set contained in $\Q$. Moreover, there exists a $\kappa \leq n$ 
        such that the Laurent series expansion of $\Gamma_{\|\cdot\|}$ in a neighborhood of $0$ is given by
        \begin{equation}
            \label{eq:gammalaurent}
            \sum\limits_{j = 0}^\kappa \frac{1}{\lambda^j} \langle \mu_j(\omega),\xi\rangle + \mathcal{O}(\lambda),
        \end{equation}
        where $\mu_j(\omega)$ are currents on $X$ satisfying $\mathrm{supp}\,\mu_\kappa(\omega) \subseteq \mathrm{supp}\,\mu_{\kappa-1}(\omega) \subseteq \cdots \subseteq \mathrm{supp}\,\mu_1(\omega) \subseteq \mathrm{supp}\,\mu_0(\omega) = \overline{\mathrm{supp}\,\omega}$. Moreover, $\mathrm{supp}\,\mu_1(\omega) \subseteq V$ and if $s$ defines the radical ideal of $V$ then $\mathrm{supp}\,\mu_j(\omega) \subseteq V_{\mathrm{sing}} \cup (X_{\mathrm{sing}}\cap V)$ for $j\geq 2$.
        \item Suppose that $\omega \in \mathcal{A}_{s,\|\cdot\|}(X)$. Then we have that $\d \omega, \frac{\d\|s\|^2}{\|s\|^2} \wedge \omega \in \mathcal{A}_{s,\|\cdot\|}(X)$, and, for any $j$,
        \begin{equation}
            \label{eq:dmu}
            \d \mu_j(\omega) = \mu_j(\d \omega) + \mu_{j+1} \bigg( \frac{\d \|s\|^2}{\|s\|^2} \wedge \omega \bigg).
        \end{equation}
    \end{enumerate}
\end{theorem}
%
%
%

\medskip

\subsection{Proof of \Cref{thm:1} (i)}

To begin with we consider the case where $\omega$ is of top degree and $s$ defines a normal crossings divisor. We have the following lemma.
\begin{lemma}
    \label{lem:ii}
    Suppose that $X$ is a manifold and that $s$ defines a normal crossings divisor with support $V = \{s=0\}$. Let $\omega \in \mathcal{A}_s^{n,n}(X)$ and $\|\cdot\|$ be any Hermitian metric on $E$. For any test function $\xi \in \mathscr{D}^{0,0}(X)$ we let
    \[
        \Gamma_{\|\cdot\|}(\lambda) = \int\limits_X \|s\|^{2\lambda} \omega \xi,
    \]
    for $\mathfrak{Re}\,\lambda \gg 0$. Then $\Gamma_{\|\cdot\|}$ has a meromorphic continuation to $\C_\lambda$ with polar set given by a discrete subset $P \subset \Q \cap (-\infty, N]$ for some $N \geq 0$ independent of $\|\cdot\|$ and $\xi$. Moreover, there exists a $0 \leq \kappa \leq n$ such that the Laurent series expansion of $\Gamma_{\|\cdot\|}$ around $0$ is given by
    \[
        \Gamma_{\|\cdot\|}(\lambda) = \sum\limits_{j=0}^\kappa \frac{1}{\lambda^j} \langle \mu_j(\omega), \xi\rangle + \mathcal{O}(\lambda),
    \]
    where $\mu_j(\omega)$, for $j=0,\hdots,\kappa$, are $(n,n)$-currents on $X$ satisfying $\mathrm{supp}\,\mu_\kappa(\omega) \subseteq \mathrm{supp}\,\mu_{\kappa-1}(\omega) \subseteq \cdots \subseteq \mathrm{supp}\,\mu_1(\omega) \subseteq \mathrm{supp}\,\mu_0(\omega) = \overline{\mathrm{supp}\,\omega}$, $\mathrm{supp}\,\mu_1(\omega) \subseteq V$ and $\mathrm{supp}\,\mu_j(\omega) \subseteq V_{\mathrm{sing}}$ for $j\geq 2$.
\end{lemma}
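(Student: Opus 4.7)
The plan is to reduce to a local computation and then invoke \Cref{lem:i}. By a partition of unity subordinate to a finite cover of $\overline{\mathrm{supp}\,\omega}$ by coordinate charts in which $s$ has a monomial frame representation $\|s\|^2 = |z_1^{m_1} \cdots z_{\kappa_*}^{m_{\kappa_*}}|^2 e^{-\phi}$ (with $\kappa_* \leq n$ the number of local components of $s$) and $\omega = \Psi \, \d z \wedge \d \bar{z} / |z_1^{m_1} \cdots z_{\kappa_*}^{m_{\kappa_*}}|^{2N}$ for smooth compactly supported $\Psi$, obtained from the definition of $\mathcal{A}_V^{n,n}(X)$ after shrinking charts and using compactness of $\overline{\mathrm{supp}\,\omega}$, I would write $\Gamma_{\|\cdot\|}(\lambda)$ as a finite sum of integrals of the type appearing in \Cref{lem:i} with $\tau = 0$ and $v = e^{-\phi}$. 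The meromorphic continuation to $\C_\lambda$, together with the location of the polar set in a discrete subset of $\Q \cap (-\infty, N]$ with $N$ independent of $\|\cdot\|$ and $\xi$, then follows directly.

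To bound the pole order at $\lambda = 0$, I would pass to polar coordinates $z_i = r_i e^{i\theta_i}$ in the $\kappa_*$ radial directions, reducing each local integral to
\[
\int \prod_{i=1}^{\kappa_*} r_i^{2m_i(\lambda - N) + 1} \, G(r, \theta, z'') \, \d r \, \d\theta \, \d z'' \wedge \d \bar{z}''
\]
with $G$ smooth and compactly supported. Regularizing one radial variable at a time by the standard meromorphic continuation of integrals $\int_0^\infty r^{s-1} g(r) \, \d r$ (which have only simple poles, at $s = 0, -1, -2, \ldots$ with residues involving $g^{(k)}(0)$), each radial integral contributes at most one simple pole at $\lambda = 0$; iterating through all $\kappa_*$ directions bounds the total pole order at $\lambda = 0$ by $\kappa_*$, and hence by $n$. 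This gives the integer $\kappa$ and the Laurent expansion asserted in the statement. Note that the factor $h(\lambda)/\lambda^{2\kappa_*}$ from \Cref{lem:i} only yields the weaker bound $2\kappa_*$, so the polar-coordinate analysis is genuinely needed.

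The Laurent coefficients of $\Gamma_{\|\cdot\|}$ at $\lambda = 0$ depend linearly and continuously on $\xi$ (since the meromorphic continuation can be constructed in a way that depends continuously on $\xi$), so they define $(n,n)$-currents $\mu_j(\omega)$ on $X$. For the support properties: if $\mathrm{supp}\,\xi$ is disjoint from $\overline{\mathrm{supp}\,\omega}$ then $\Gamma_{\|\cdot\|} \equiv 0$, giving $\mathrm{supp}\,\mu_j \subseteq \overline{\mathrm{supp}\,\omega}$; if $\mathrm{supp}\,\xi$ is disjoint from $V$, then $\|s\|^{2\lambda}$ is entire on $\mathrm{supp}\,\xi$, so $\Gamma_{\|\cdot\|}$ is entire in $\lambda$ with value $\int \omega \xi$ at $0$, forcing $\mu_j = 0$ for $j \geq 1$ against such $\xi$ and giving $\mathrm{supp}\,\mu_j \subseteq V$ for $j \geq 1$ as well as $\mathrm{supp}\,\mu_0 = \overline{\mathrm{supp}\,\omega}$. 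At a smooth point of $V$, choose coordinates so that $s$ is a single monomial: then $\kappa_* = 1$ locally, the pole order at $\lambda = 0$ is at most $1$, and therefore $\mathrm{supp}\,\mu_j \subseteq V_{\mathrm{sing}}$ for $j \geq 2$. The nested chain of supports follows by combining these facts.

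I expect the main obstacle to be the sharp pole-order bound $\kappa_*$ (rather than $2\kappa_*$) at $\lambda = 0$: the iterated radial regularization requires careful verification that the residues at each stage depend analytically on the remaining radial and transverse variables and do not create new poles at $\lambda = 0$. A clean induction on $\kappa_*$, tracking the smoothness and compact support of the residual densities through each regularization step, should handle this.
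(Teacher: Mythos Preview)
Your reduction to local charts and invocation of \Cref{lem:i} for the meromorphic continuation matches the paper. For the sharp pole-order bound at $\lambda=0$ you take a genuinely different route: the paper writes $\Gamma_{\|\cdot\|}(\lambda)=\frac{h(\lambda)}{\lambda^{2\kappa}}I(\lambda)$ as in \cref{eq:merocont} and then shows, by expanding $(\log|z_1^{m_1}|^2+\cdots+\log|z_\kappa^{m_\kappa}|^2)^{k-r}$ with the multinomial theorem and integrating by parts, that $I^{(k)}(0)=0$ for $k<\kappa$, which cuts the naive bound $2\kappa$ down to $\kappa$. Your iterated one-variable Mellin regularization in polar coordinates is a valid alternative and arguably more transparent; the one point you should make explicit is that your density $G$ carries the factor $e^{-\lambda\phi}$ and hence depends holomorphically on $\lambda$, so at each radial step you are Mellin-regularizing a $\lambda$-holomorphic family of smooth compactly supported functions. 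What the paper's approach buys in exchange is explicit integral formulas for the Laurent coefficients $c_j$ in terms of the quantities $I_{k,r}$ of \cref{eq:Ikr}, with their supports identified via Poincar\'e--Lelong.

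That payoff is exactly where your proposal has a gap. Your arguments correctly give $\mathrm{supp}\,\mu_j\subseteq\overline{\mathrm{supp}\,\omega}$, $\mathrm{supp}\,\mu_j\subseteq V$ for $j\geq 1$, and $\mathrm{supp}\,\mu_j\subseteq V_{\mathrm{sing}}$ for $j\geq 2$; pushing further one gets $\mathrm{supp}\,\mu_j$ contained in the locus where at least $j$ local branches of $V$ meet. But the statement asks for the \emph{nested chain} $\mathrm{supp}\,\mu_j\subseteq\mathrm{supp}\,\mu_{j-1}$, and this does not follow ``by combining these facts'': vanishing of $\mu_{j-1}$ on an open set says nothing a priori about $\mu_j$ there, and containment in a nested family of strata is strictly weaker than containment in the actual supports. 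The paper obtains the chain by tracking which integrals $I_{k,r}$ contribute to each $c_j$ and observing that the terms determining $\mathrm{supp}\,\mu_j$ recur in the expression for $c_{j-1}$ (even so, the paper only sketches this step and refers to \cite{Lthesis} for details). Your iterated-Mellin framework does not hand you these formulas; to close the gap you would need to extract analogous explicit residue expressions from the radial regularizations and run the same comparison, or supply an independent argument for the nesting.
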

\begin{proof}
    The statement that $\Gamma_{\|\cdot\|}$ has a meromorphic continuation with the prescribed polar set follows immediately from \Cref{prop:A} by setting $\tau = 0$.
    %
    Now, consider the Laurent series expansion of $\Gamma_{\|\cdot\|}(\lambda)$ around $\lambda = 0$,
    \[
        \Gamma_{\|\cdot\|}(\lambda) = \sum\limits_{j=0}^{N_0} \frac{1}{\lambda^j} c_j + \mathcal{O}(\lambda),
    \]
    where $c_j \in \C$, for some $N_0 \geq 0$. Since being a current is a local property, we may assume that $\xi$ has support in some neighborhood where we can find local holomorphic coordinates $z = (z_1,\hdots,z_n)$ such that $\|s\|^{2} = |z_1^{m_1}\cdots z_\kappa^{m_\kappa}|^2 e^{-\phi}$ for some $1\leq \kappa \leq n$ and $\phi \in \mathscr{C}^\infty(\mathrm{supp}\,\xi,\mathbb{R})$. Since $\|s\|^{2\lambda} \omega$ is smooth for $\mathfrak{Re}\,\lambda$ sufficiently large, we can find an integer $N \geq 0$ and a smooth function $\Psi$ such that $\omega$ is given by the right-hand side of \cref{eq:pullbackomega}. Thus, in a neighborhood of $\lambda = 0$ we know from the proof of \Cref{lem:i}, cf. \cref{eq:merocont}, that we may write
    \[
        \Gamma_{\|\cdot\|}(\lambda) = \frac{h(\lambda)}{\lambda^{2\kappa}} I(\lambda),
    \]
    where $h(\lambda)$ is given by \cref{eq:hlambda}, and where
    \begin{equation}
        \label{eq:Ilambda}
        I(\lambda) = \int\limits_{X} |z_1^{m_1} \cdots z_\kappa^{m_\kappa}|^{2\lambda} P \bar{P} \big( e^{-\lambda\phi}\Psi \xi  \big)\d z \wedge \d \bar{z},
    \end{equation}
    with $P$ as in the proof of \Cref{lem:i}. In particular, both $h$, and $I$ are holomorphic in a neighborhood of $0$. We have that
    \begin{align}
        \nonumber
        c_j &= \underset{\lambda = 0}{\mathrm{Res}} \left\{ \lambda^{j-1} \Gamma_{\|\cdot\|}(\lambda) \right\} = \underset{\lambda = 0}{\mathrm{Res}} \left\{ \frac{1}{\lambda^{2\kappa - j + 1}} h(\lambda) I(\lambda) \right\} \\
        \nonumber
        &= \frac{1}{(2\kappa-j)!} \frac{\d^{2\kappa-j}}{\d \lambda^{2\kappa-j}} \big( h(\lambda) I(\lambda) \big)\Big|_{\lambda = 0} \\
        \label{eq:cjexpansion}
        &= \frac{1}{(2\kappa-j)!} \sum\limits_{\ell=0}^{2\kappa-j} \binom{2\kappa-j}{\ell} h^{(\ell)}(0) I^{(2\kappa-j-\ell)}(0).
    \end{align}
    Let $k = 2\kappa-j-\ell$, and consider $I^{(k)}(0)$. A standard computation with Leibniz rule gives that
    \begin{align*}
        I^{(k)}(0) &= \frac{\d^k }{\d \lambda^k}I(\lambda) \big|_{\lambda = 0} \\
        &= \sum\limits_{r=0}^{k} \binom{k}{r} (-1)^r \int\limits_{\C^n} \big(\log | z_1^{m_1} \cdots z_\kappa^{m_\kappa}|^{2}\big)^{k-r} P\bar{P}\big( \phi^r \Psi \xi \big)\d z \wedge \d \bar{z} \\
        &= \sum\limits_{r=0}^{k} \binom{k}{r} (-1)^r \int\limits_{\C^n} \big(\log | z_1^{m_1} |^2 + \cdots + \log |z_\kappa^{m_\kappa}|^{2}\big)^{k-r} P\bar{P}\big( \phi^r \Psi \xi \big)\d z \wedge \d \bar{z}.
    \end{align*}
    By the multinomial theorem we have that
    \[
        \big(\log | z_1^{m_1} |^2 + \cdots + \log |z_\kappa^{m_\kappa}|^{2}\big)^{k-r} = \;\sum_{\mathclap{\substack{\alpha \in \mathbb{Z}^{\kappa}_{\geq 0} \\ |\alpha| = k-r}}} \;\frac{(k-r)!}{\alpha_1! \cdots \alpha_\kappa!}\prod_{t = 1}^\kappa \big(\log|z_t^{m_t}|^2\big)^{\alpha_t}.
    \]
    We see that if $k-r < \kappa$, each multi-index $\alpha$ will contain at least one $0$ entry. Suppose, for simplicity, that $\alpha_1 = 0$ for a given term. Then we clearly have that
    \[
        \frac{\partial}{\partial z_1} \prod_{t = 1}^\kappa \big(\log|z_t^{m_t}|^2\big)^{\alpha_t} = \frac{\partial}{\partial z_1} \prod_{t = 2}^\kappa \big(\log|z_t^{m_t}|^2\big)^{\alpha_t} = 0.
    \]
    If $k-r<\kappa$, it follows that
    \[
        \int\limits_{\C^n} \big(\log | z_1^{m_1} |^2 + \cdots + \log |z_\kappa^{m_\kappa}|^{2}\big)^{k-r}P\bar{P}\big( \phi^r \Psi \xi \big)\d z \wedge \d \bar{z} = 0
    \]
    by integration by parts; thus, $I^{(k)}(0) = 0$ if $k<\kappa$. From \cref{eq:cjexpansion} it follows that $c_j = 0$ if $2\kappa - j < \kappa$, that is, for $j > \kappa$. Thus, we have that
    \[
        \Gamma_{\|\cdot\|}(\lambda) = \sum\limits_{j=1}^{\kappa} \frac{1}{\lambda^j} c_j + \mathcal{O}(\lambda).
    \]
    We see from \cref{eq:cjexpansion} and the expansion of $I^{(k)}(0)$ that $c_j$, for each $j=0,\hdots,\kappa$, consists of a finite sum of integrals of the form
    \[
        \int\limits_{\C^n} \big(\log | z_1^{m_1} |^2 + \cdots + \log |z_\kappa^{m_\kappa}|^{2}\big)^{k-r}P \bar{P}\big(\phi^r \Psi \xi\big) \d z \wedge \d \bar{z}.
    \]
    %
    %
    %
    Since $(\log|z_1^{m_1}|^2 + \cdots + \log|z_\kappa^{m_\kappa}|^2)^{k-r}$ is locally integrable in $\C^n$, it follows by the product rule that $c_j$ consists of a finite sum of integrals, where the integrands consist of derivatives on the test function $\xi$ multiplied by $L^1_{\mathrm{loc}}$-functions. This immediately implies that $c_j$ defines the action of a $(n,n)$-current on $\xi$, which we denote by $c_j = \langle \mu_j(\omega), \xi \rangle$.

    In \cite{Lthesis} it is shown that $\mathrm{supp}\,\mu_j(\omega) \subseteq \mathrm{supp}\,\mu_{j-1}(\omega)$ for each $j=1,\hdots,\kappa$. For convenience we sketch an argument. 
    Let
    \begin{equation}
        \label{eq:Ikr}
        I_{k, r} = \;\sum_{\mathclap{\substack{\alpha \in \mathbb{Z}^{\kappa}_{\geq 0} \\ |\alpha| = k-r}}} \;\frac{(k-r)!}{\alpha_1! \cdots \alpha_\kappa!} \int\limits_{\C^n} \prod_{t = 1}^\kappa \big(\log|z_t^{m_t}|^2\big)^{\alpha_t} P\bar{P} \big(\phi^r \Psi \xi \big) \d z \wedge \d \bar{z}.
    \end{equation}
    Then
    \begin{equation}
        \label{eq:Ik}
        I^{(k)}(0) = \sum\limits_{r=0}^k \binom{k}{r}(-1)^r I_{k, r}.
    \end{equation}
    %
    By the above, we know that $I_{k, r} = 0$ if $k-r < \kappa$. If $k-r = \kappa$, then partial integration shows that each term in the right-hand side of \cref{eq:Ikr} except for the $\alpha = (1,\hdots,1)$ term vanishes. It follows that
    \begin{align*}
        I_{k, r} &= \kappa! \int\limits_{\C^n} \prod_{t = 1}^\kappa m_t \frac{\partial^2}{\partial z_t \partial \bar{z}_t}\big(\log|z_t|^2 \big)\frac{\partial^{2 N \sum_{i=1}^\kappa m_i - 2\kappa}}{\partial z_1^{N m_1 -1} \cdots \partial \bar{z}_\kappa^{N m_\kappa -1}} \big( \phi^r \Psi \xi \big) \d z \wedge \d \bar{z},
    \end{align*}
    where $\frac{\partial^2}{\partial z_t \partial \bar{z}_t} \log|z_t|^2$ is to be regarded as a distribution. Thus, if $k-r = \kappa$, by repeated use of the Poincaré--Lelong formula,
    \[
        \partial \bar{\partial} \log|z_j|^2 = - 2\pi i [z_j = 0],
    \]
    for $j=1,\hdots,\kappa$, we have that
    \[
        I_{k, r} = \kappa! (-2\pi i)^\kappa \prod_{t = 1}^\kappa m_t  \!\!\!\! \int\limits_{\{z_1=\cdots =z_\kappa = 0\}} \!\!\!\!\frac{\partial^{2 N \sum_{i=1}^\kappa m_i - 2\kappa}}{\partial z_1^{N m_1 -1} \cdots \partial \bar{z}_\kappa^{N m_\kappa -1}} \big( \phi^r \Psi \xi \big) \d z' \wedge \d \bar{z}',
    \]
    where $\d z' \wedge \d \bar{z}' = \d z_{\kappa + 1} \wedge \d \bar{z}_{\kappa + 1} \wedge \cdots \wedge \d z_n \wedge \d \bar{z}_n$. 
    By \cref{eq:cjexpansion} and \cref{eq:Ik} it follows that
    \[
        c_\kappa = h(0) I_{\kappa,0},
    \]
    whence $\mathrm{supp}\,\mu_{\kappa}(\omega) \subseteq \{z_1=\cdots=z_\kappa = 0\} \cap \mathrm{supp}\,\Psi$. Similarly, by \cref{eq:cjexpansion} and \cref{eq:Ik} it follows that
    \[
        c_{\kappa-1} = \frac{1}{(\kappa+1)!}h(0) I_{\kappa+1,0} - \frac{1}{\kappa!} h(0) I_{\kappa+1,1} + \frac{1}{\kappa!} h'(0) I_{\kappa,0}.
    \]
    From \cref{eq:Ikr}, setting $k = \kappa + 1$ and $r = 1$, we find that only the term with $\alpha=(1,\hdots,1)$ gives a non-zero contribution to $I_{\kappa+1,1}$. Thus, by the same argument as above, $I_{\kappa+1,1}$ is an integral over the locus $\{z_1=\cdots = z_\kappa=0\}$. Looking at the expression for $I_{\kappa+1,0}$, we find that the only terms that contribute are $\alpha = (2,1,\hdots,1), (1,2,1,\hdots,1), \hdots,$ $(1,\hdots,1,2)$. Consider for example the term with $\alpha=(2,1,\hdots,1)$,
    %
    \begin{align*}
        (I_{\kappa+1, 0})_\alpha &= \frac{(\kappa+1)!}{2!} \int\limits_{\C^n} \big(\log|z_1^{m_1}|^2\big)^{2} \prod_{t = 2}^\kappa \log|z_t^{m_t}|^2 \frac{\partial^{2N \sum_{i=1}^\kappa m_i}}{\partial z_1^{N m_1} \cdots \partial \bar{z}_\kappa^{N m_\kappa}} \big(\Psi \xi \big) \d z \wedge \d \bar{z} \\
        &= \frac{(\kappa+1)!}{2!} \int\limits_{\C^n} \big(\log|z_1^{m_1}|^2\big)^2 \prod_{t = 2}^\kappa \frac{\partial^2}{\partial z_t \partial \bar{z}_t} \big( \log|z_t^{m_t}|^2\big) \times \\
        &\qquad \qquad \qquad \qquad \ \times \frac{\partial^{2N \sum_{i=1}^\kappa m_i - 2(\kappa-1)}}{\partial z_1^{N m_1} \partial \bar{z}_1^{N m_1} \partial z_2^{N m_2 - 1} \cdots \partial \bar{z}_\kappa^{N m_\kappa - 1}} \big(\Psi \xi \big) \d z \wedge \d \bar{z} \\
        &= \frac{(\kappa+1)!}{2!}(-2\pi i)^{\kappa-1} \prod\limits_{t=1}^\kappa m_t \!\!\!\! \int\limits_{\{z_2=\cdots =z_\kappa = 0\}} \!\!\!\! \big(\log|z_1|^2\big)^2 \times \\
         &\qquad \qquad \qquad \qquad \ \times \frac{\partial^{2N \sum_{i=1}^\kappa m_i - 2(\kappa-1)}}{\partial z_1^{N m_1} \partial \bar{z}_1^{N m_1} \partial z_2^{N m_2 - 1} \cdots \partial \bar{z}_\kappa^{N m_\kappa - 1}} \big(\Psi \xi \big) \d z' \wedge \d \bar{z}',
    \end{align*}
    where $\d z' \wedge \d \bar{z}' = \d z_1 \wedge \d \bar{z}_1 \wedge \d z_{\kappa + 1} \wedge \d \bar{z}_{\kappa + 1} \wedge \cdots \wedge \d z_n \wedge \d \bar{z}_n$. Thus, $(I_{\kappa+1, 0})_\alpha$ is an integral over the locus $\{z_2 = \cdots = z_\kappa = 0\}$. By symmetry, it follows that $I_{\kappa+1, 0}$ is an integral over the locus
    \[
        \bigcup\limits_{i=1}^\kappa \bigcap\limits_{j\neq i} \{ z_j = 0\},
    \]
    whence
    \[
        \mathrm{supp}\,\mu_{\kappa-1}(\omega) \subseteq \bigcup\limits_{i=1}^\kappa \bigcap\limits_{j\neq i} \{ z_j = 0\} \cap \mathrm{supp}\,\Psi.
    \]
    Furthermore, since the integral $I_{\kappa,0}$ appears in both the expression for $c_{\kappa}$ and $c_{\kappa-1}$, we have that
    \[
        \mathrm{supp}\,\mu_{\kappa}(\omega) \subseteq \mathrm{supp}\,\mu_{\kappa-1}(\omega)
    \]
    %
    %
    %
    By analogous arguments for $k-r = \kappa+2,\hdots,2\kappa$ we find that 
    \[
        \mathrm{supp}\,\mu_j(\omega) \subseteq \!\!\!\!\!\!\bigcup\limits_{\substack{\ell_1,\hdots,\ell_j = 1 \\ \ell_1<\cdots<\ell_{j}}}^\kappa \!\!\!\!\!\!\{ z_{\ell_1} = \cdots = z_{\ell_j} = 0\},
    \]
    and that $\mathrm{supp}\,\mu_{j}(\omega) \subseteq \mathrm{supp}\,\mu_{j-1}(\omega)$ for each $j=1,\hdots,\kappa$.

    It is clear that $\Gamma_{\|\cdot\|}$ is holomorphic if $\mathrm{supp}\,\xi \subseteq X \setminus V$. Thus, $\mathrm{supp}\,\mu_j(\omega) \subseteq V$ for $j=1,\hdots,\kappa$. It follows that $\mu_0(\omega)$ is a current extension of $\omega$ across $V$, and we have that $\mathrm{supp}\,\mu_0(\omega) = \overline{\mathrm{supp}\,\omega}$. 
    It is shown in \cite{Lthesis} that if $V$ is smooth and $s$ defines the radical ideal of $V$ then $\kappa \leq 1$. 
    Thus, if $\mathrm{supp}\,\xi \subseteq X \setminus V_{\mathrm{sing}}$, $\Gamma_{\|\cdot\|}$ has at most a pole of order $1$ at $\lambda = 0$. This implies that $\mathrm{supp}\,\mu_j(\omega) \subseteq V_{\mathrm{sing}}$ for $j \geq 2$.
\end{proof}
Now we generalize \Cref{lem:ii} to $\omega \in \mathcal{A}_s^{p,q}(X)$. In this setting, we note that $\omega \wedge \xi \in \mathcal{A}_s^{n,n}(X)$ for any $\xi \in \mathscr{D}^{n-p,n-q}(X)$. \Cref{lem:ii} then implies that there is some $0 \leq \kappa \leq n$ and currents $\mu_j(\omega\wedge \xi)$ with compact support, for $0\leq j \leq \kappa$, which depend on $\omega$ (a priori on $\omega \wedge \xi$) such that
\[
    \Gamma_{\|\cdot\|} (\lambda) = \sum\limits_{j=0}^\kappa \frac{1}{\lambda^j} \langle \mu_j(\omega \wedge \xi),1 \rangle + \mathcal{O}(\lambda).
\]
For $\omega \in \mathcal{A}_s^{p,q}(X)$ we then define
\begin{equation}
    \label{eq:mupq}
    \langle \mu_j(\omega), \xi \rangle := \langle \mu_j(\omega \wedge \xi), 1\rangle.
\end{equation}
It is clear from the definition of $\mu_j(\omega \wedge \xi)$ that \cref{eq:mupq} defines a linear functional on $\mathscr{D}^{n-p,n-q}(X)$. Furthermore, if $\omega \in \mathcal{A}_s^{n,n}(X)$, it follows by \Cref{lem:ii} that, if $\xi$ is a test function, $\mu_j(\omega \xi) = \xi \mu_j(\omega)$, which agrees with \cref{eq:mupq}.

To see that \cref{eq:mupq} defines a $(p,q)$-current $\mu_{j}(\omega)$, it remains to check continuity. Since being a current is a local statement, we may assume that $\xi$ has support in a small neighborhood with local coordinates $z = (z_1,\hdots,z_n)$ and that 
\[
    \xi = \sum\limits_{J,K} \xi_{J K} \d z_J \wedge \d \bar{z}_K,
\]
where the sum is over all multi-indices $J$,$K$ consisting of ordered subsets of $\{1,\hdots,n\}$ of size $n-p$ and $n-q$, respectively. Since $\mu_j(\omega)$ is a linear functional, we can fix some indices $(J,K)$ and consider $\langle \mu_j(\omega), \xi_{J K} \d z_J \wedge \d \bar{z}_K \rangle$. By \cref{eq:mupq}, we have
\begin{align*}
    \langle \mu_j(\omega), \xi_{J K} \d z_J \wedge \d \bar{z}_K \rangle &= \langle \mu_j(\omega \wedge \xi_{J K} \d z_J \wedge \d \bar{z}_K), 1\rangle \\
    &= \langle \xi_{J K} \mu_j(\omega \wedge \d z_J \wedge \d \bar{z}_K), 1\rangle \\
    &= \langle \mu_j(\omega \wedge \d z_J \wedge \d \bar{z}_K), \xi_{J K} \rangle,
\end{align*}
where we used that $\mu_j(\omega \xi) = \xi \mu_j(\omega)$ for $\omega \in \mathcal{A}_s^{n,n}(X)$ and $\xi \in \mathscr{D}^{0,0}(X)$. Since we know that $\mu_j(\omega \wedge \d z_J \wedge \d \bar{z}_K)$ is a continuous linear functional on $\mathscr{D}^{0,0}(X)$, 
it follows that $\mu_j(\omega)$ is a continuous linear functional on $\mathscr{D}^{n-p,n-q}(X)$.

Thus, \Cref{lem:ii} holds for $\omega\in \mathcal{A}_s^{p,q}(X)$ with $\mu_j(\omega)$ defined as in \cref{eq:mupq}. We have the following formula.
\begin{lemma}
    \label{lem:ii2}
    Let $\omega\in\mathcal{A}_s^{p,q}(X)$. For each $j=0,\hdots,\kappa$, $\mu_j(\omega)$ satisfies
    \begin{equation}
        \label{eq:muproduct}
        \mu_j(\omega) \wedge \xi = \mu_j(\omega \wedge \xi),
    \end{equation}
    for any smooth $(p',q')$-form $\xi$.
\end{lemma}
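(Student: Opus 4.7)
The plan is to verify the identity by pairing both sides with an arbitrary test form $\eta \in \mathscr{D}^{n-p-p', n-q-q'}(X)$ of complementary bidegree and unwinding the two definitions in play: the wedge product of a current with a smooth form from \cref{eq:wedgeproduct}, and the definition of $\mu_j(\omega)$ for $\omega$ of arbitrary bidegree from \cref{eq:mupq}. A preliminary check is that both sides of \cref{eq:muproduct} are well-defined. Indeed, since $\xi$ is smooth, if $\omega \in \mathcal{A}_{\|s\|^2}^{p,q}(X)$ with $\|s\|^{2N}\omega$ smooth, then $\|s\|^{2N}(\omega \wedge \xi) = (\|s\|^{2N}\omega) \wedge \xi$ is also smooth, so $\omega \wedge \xi \in \mathcal{A}_V^{p+p', q+q'}(X)$, and hence $\mu_j(\omega \wedge \xi)$ is a well-defined $(p+p', q+q')$-current via the extension of \Cref{lem:ii} just established.

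For the left-hand side, \cref{eq:wedgeproduct} gives
\begin{equation*}
\langle \mu_j(\omega) \wedge \xi, \eta \rangle = \langle \mu_j(\omega), \xi \wedge \eta \rangle.
\end{equation*}
Since $\xi \wedge \eta \in \mathscr{D}^{n-p,n-q}(X)$, applying \cref{eq:mupq} to $\omega$ with this test form yields
\begin{equation*}
\langle \mu_j(\omega), \xi \wedge \eta \rangle = \langle \mu_j(\omega \wedge \xi \wedge \eta), 1 \rangle.
\end{equation*}
For the right-hand side, applying \cref{eq:mupq} to $\omega \wedge \xi \in \mathcal{A}_V^{p+p', q+q'}(X)$ with test form $\eta$ and using the associativity of the wedge product of forms, we get
\begin{equation*}
\langle \mu_j(\omega \wedge \xi), \eta \rangle = \langle \mu_j((\omega \wedge \xi) \wedge \eta), 1 \rangle = \langle \mu_j(\omega \wedge \xi \wedge \eta), 1 \rangle.
\end{equation*}
The two expressions coincide, and since $\eta$ was arbitrary, the identity \cref{eq:muproduct} follows.

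There is essentially no analytic obstacle here: the entire argument is a bookkeeping check reducing the statement to the fact, already proved, that $\mu_j$ depends only on the underlying top-degree form $\omega \wedge \xi \wedge \eta$. The only thing requiring any care is to apply \cref{eq:mupq} consistently, once with $(\omega, \xi \wedge \eta)$ playing the roles of the $\mathcal{A}_V$-form and the compactly supported test form, and once with $(\omega \wedge \xi, \eta)$; no extra sign arises because the sign convention in \cref{eq:wedgeproduct} is exactly what makes $\langle \mu_j(\omega) \wedge \xi, \eta \rangle = \langle \mu_j(\omega), \xi \wedge \eta \rangle$, which then slots directly into the top-degree definition.
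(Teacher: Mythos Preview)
Your proof is correct and follows essentially the same approach as the paper: pair both sides with an arbitrary test form $\eta$, unwind the wedge product via \cref{eq:wedgeproduct}, and then apply \cref{eq:mupq} twice (once to $\omega$ with test form $\xi\wedge\eta$, once to $\omega\wedge\xi$ with test form $\eta$) to reduce both sides to $\langle \mu_j(\omega\wedge\xi\wedge\eta),1\rangle$. Your added preliminary check that $\omega\wedge\xi\in\mathcal{A}_V^{p+p',q+q'}(X)$ is a welcome clarification but does not change the structure of the argument.
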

\begin{proof}
    Let $\eta \in \mathscr{D}^{n-p-p',n-q-q'}(X)$. Since $\mu_j(\omega)$ is a $(p,q)$-current, by \cref{eq:wedgeproduct} we have that
    \[
        \langle \mu_j(\omega) \wedge \xi, \eta \rangle = \langle \mu_j(\omega), \xi \wedge \eta \rangle.
    \]
    By \cref{eq:mupq} we have
    \[
        \langle \mu_j(\omega), \xi \wedge \eta \rangle = \langle \mu_j(\omega \wedge \xi \wedge \eta), 1 \rangle.
    \]
    Since $\omega \wedge \xi \in \mathcal{A}_V^{p+p',q+q'}(X)$, again by \cref{eq:mupq}, we have that
    \[
        \langle \mu_j(\omega \wedge \xi \wedge \eta), 1 \rangle = \langle \mu_j(\omega \wedge \xi), \eta \rangle.
    \]
    %
    %
        %
    %
    Thus, $\langle \mu_j(\omega) \wedge \xi,\eta \rangle = \langle \mu_j(\omega \wedge \xi),\eta \rangle$ for all $\eta$ which proves the lemma.
\end{proof}
\begin{proof}[Proof of \Cref{thm:1} (i)]
    Let $\pi : \widetilde{X} \rightarrow X$ be a modification such that $\widetilde{X}$ is smooth and $\pi^*s : \widetilde{X} \rightarrow \pi^* E$ defines a normal crossings divisor. As in the proof of \Cref{prop:A}, with $\tau = 0$, we have, for $\mathfrak{Re}\,\lambda \gg 0$,
    \[
        \Gamma_{\|\cdot\|}(\lambda) = \int\limits_X \|s\|^{2\lambda} \omega \wedge \xi = \int\limits_{\widetilde{X}} \|\pi^*s\|^{2\lambda} \pi^* \omega \wedge \pi^*\xi.
    \]
    Since $\pi^*\omega \wedge \pi^*\xi \in \mathcal{A}_{\pi^{*}s}^{n,n}(\widetilde{X})$, by \Cref{lem:ii} $\Gamma_{\|\cdot\|}(\lambda)$ has a meromorphic continuation to $\C_\lambda$, with polar set given by a discrete subset $P \subset \Q \cap (-\infty,N]$ for some $N \geq 0$. Moreover, there is some $0 \leq \kappa\leq n$ such that, in a neighborhood of $\lambda = 0$,
    \[
        \Gamma_{\|\cdot\|}(\lambda) = \sum\limits_{j = 1}^{\kappa} \frac{1}{\lambda^j}\langle \mu_j(\pi^*\omega \wedge \pi^* \xi),1 \rangle + \mathcal{O}(\lambda),
    \]
    where $\mu_j(\pi^*\omega \wedge \pi^* \xi)$ define $(n,n)$-currents on $\widetilde{X}$. By \cref{eq:mupq} we may write
    \[
        \Gamma_{\|\cdot\|}(\lambda) = \sum\limits_{j = 1}^{\kappa} \frac{1}{\lambda^j} \langle \mu_j(\pi^*\omega),\pi^* \xi \rangle + \mathcal{O}(\lambda),
    \]
    where $\mu_j(\pi^*\omega)$ are $(p,q)$-currents on $\widetilde{X}$. Since $\pi$ is proper, by \cref{eq:pushforward} we have that
    \[
        \langle \mu_j(\pi^*\omega),\pi^* \xi \rangle = \langle \mu_j(\omega),\xi \rangle,
    \]
    where 
    \begin{equation}
        \label{eq:mupushforward}
        \mu_j(\omega) := \pi_* \mu_j(\pi^*\omega)
    \end{equation}
    is a current on $X$, for each $j=0,\hdots,\kappa$.

    By \Cref{lem:ii} we have that $\mathrm{supp}\,\mu_0(\pi^*\omega) = \overline{\mathrm{supp}\,\pi^* \omega}$, $\mathrm{supp}\,\mu_1(\pi^*\omega) \subseteq \pi^{-1}V$, and $\mathrm{supp}\,\mu_\kappa(\pi^*\omega) \subseteq \cdots \subseteq \mathrm{supp}\,\mu_2(\pi^*\omega) \subseteq (\pi^{-1}V)_{\mathrm{sing}}$.  Furthermore, it follows immediately by taking direct images that $\mathrm{supp}\,\mu_0(\omega) = \mathrm{supp}\,\pi_*\mu_0(\pi^*\omega) = \overline{\mathrm{supp}\,\omega}$, and $\mathrm{supp}\,\mu_\kappa(\omega) \subseteq \cdots \subseteq \mathrm{supp}\,\mu_0(\omega)$.
    
    It is shown in \cite{Lthesis} that if $X$ is smooth, and $V$ is a submanifold, then $\Gamma_{\|\cdot\|}(\lambda)$ has a pole of order at most $1$ at the origin. Thus, it follows that $\mathrm{supp}\,\mu_1(\omega) \subset V$ and $\mathrm{supp}\,\mu_j(\omega) \subseteq V_{\mathrm{sing}} \cup (X_{\mathrm{sing}}\cap V)$ for each $j\geq 2$.
    
\end{proof}
A priori \Cref{lem:ii2} holds in the case when $X$ is smooth and $s$ defines a normal crossings divisor. The corresponding statement in the general setting follows by \Cref{lem:ii2} and \cref{eq:pushforwardofproduct}.
%
    %
    %
    %
    %
%

\subsection{Proof of \Cref{thm:1} (ii)}

\begin{lemma}
    \label{lem:iii}
    For $\omega \in \mathcal{A}_{s,\|\cdot\|}(X)$ we have that $\d \omega$, $\frac{\d \|s\|^2}{\|s\|^2} \wedge \omega \in \mathcal{A}_{s,\|\cdot\|}(X)$.
\end{lemma}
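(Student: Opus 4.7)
The plan is to fix a compact set $K \subset X$ and exhibit, for each of $\d\omega$ and $\frac{\d\|s\|^2}{\|s\|^2}\wedge\omega$, an integer $M\geq 0$ such that $\|s\|^{2M}$ times that form extends smoothly across $V\cap K$. By hypothesis there already exists an $N\geq 0$ with $\|s\|^{2N}\omega$ smooth in a neighborhood of $V\cap K$, and the key observation is that the underlying function $\|s\|^2$, and hence also $\d\|s\|^2$, is smooth on all of $X$ because $\|\cdot\|$ is a smooth Hermitian metric. The entire argument is essentially one application of the Leibniz rule, together with an arithmetic trick to avoid the factor $\|s\|^{2(N-1)}$.

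For the second form, the direct computation
\[
    \|s\|^{2(N+1)} \cdot \frac{\d\|s\|^2}{\|s\|^2}\wedge \omega \;=\; \d\|s\|^2 \wedge \bigl(\|s\|^{2N}\omega\bigr)
\]
exhibits the left-hand side as a wedge product of two forms that are smooth in a neighborhood of $V\cap K$, so $M = N+1$ works.

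For $\d\omega$, on $X\setminus V$ the Leibniz rule gives
\[
    \d\bigl(\|s\|^{2N}\omega\bigr) \;=\; N\,\|s\|^{2(N-1)}\,\d\|s\|^2\wedge \omega \;+\; \|s\|^{2N}\,\d\omega.
\]
Multiplying through by $\|s\|^2$ and rearranging yields, again on $X\setminus V$,
\[
    \|s\|^{2(N+1)}\,\d\omega \;=\; \|s\|^2\,\d\bigl(\|s\|^{2N}\omega\bigr)\;-\;N\,\d\|s\|^2\wedge \bigl(\|s\|^{2N}\omega\bigr).
\]
The right-hand side is smooth in a neighborhood of $V\cap K$: the first term is a smooth function times the exterior derivative of a smooth form, and the second is a wedge of smooth forms. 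Hence the left-hand side extends smoothly across $V\cap K$, and so $M = N+1$ works for $\d\omega$ as well.

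I do not anticipate any real obstacle here. The only point to notice is why one uses $M = N+1$ rather than $M = N$: the naive attempt $\|s\|^{2N}\d\omega = \d(\|s\|^{2N}\omega) - N\|s\|^{2(N-1)}\d\|s\|^2\wedge\omega$ leaves a term with the possibly singular factor $\|s\|^{2(N-1)}\omega$, which one cures by a single extra multiplication by $\|s\|^2$ so as to regroup the exponent into $\|s\|^{2N}\omega$. The computations take place on $X\setminus V$ and are then extended by continuity, so no issue arises from the fact that $X$ may be singular, since smooth forms on a reduced analytic space obey the Leibniz rule by definition of the sheaf $\mathscr{E}_X$ recalled in the preliminaries.
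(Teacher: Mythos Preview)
Your proof is correct and is essentially the same as the paper's: both use the Leibniz rule to show that $\|s\|^{2(N+1)}\d\omega = \|s\|^2\,\d(\|s\|^{2N}\omega) - N\,\d\|s\|^2\wedge(\|s\|^{2N}\omega)$ extends smoothly, and likewise for $\frac{\d\|s\|^2}{\|s\|^2}\wedge\omega$. The only cosmetic difference is that the paper writes $\omega = \widetilde{\omega}/\|s\|^{2N}$ and differentiates the quotient, whereas you differentiate the product $\|s\|^{2N}\omega$ and rearrange; these are the same computation.
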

\begin{proof}
    Since $\omega \in \mathcal{A}_{s,\|\cdot\|}(X)$, for each compact $K \subset X$ we can find an integer $N \geq0$ such that $\|s\|^{2N} \omega$ extends smoothly across $V\cap K$. Thus, we may write
    \[
        \omega = \frac{\widetilde{\omega}}{\|s\|^{2N}},
    \]
    where $\widetilde{\omega}$ is smooth across $K \cap V$. On $X \setminus V$ we have that
    \begin{align*}
        \d \omega &= \d \frac{\widetilde{\omega}}{\|s\|^{2N}} = \frac{\d \widetilde{\omega}}{\|s\|^{2N}} - N \frac{\d \|s\|^2}{\|s\|^2} \wedge \frac{\widetilde{\omega}}{\|s\|^{2N}}.
    \end{align*}
    Since $\d \widetilde{\omega}$ and $\d \|s\|^2$ are smooth across $V \cap K$, it is clear that
    \[
        \|s\|^{2(N+1)}\d \omega = \|s\|^2 \d \widetilde{\omega} - N \d\|s\|^2 \wedge \widetilde{\omega}
    \]
    extends smoothly across $V \cap K$. It is also clear that $\|s\|^{2(N+1)}\frac{\d\|s\|^2}{\|s\|^2} \wedge \omega$ extends smoothly across $V\cap K$.
\end{proof}
\begin{remark}
    For $\omega \in \mathcal{A}_{s,\|\cdot\|}(X)$ and $|\cdot|$ some different metric on $E$, it is not true in general that $\frac{\d |s|^2}{|s|^2}\wedge \omega \in \mathcal{A}_s(X)$. However, we can always find an integer $N\geq 0$ such that $|s|^{2N}\frac{\d |s|^2}{|s|^2}\wedge \omega$ extends to a locally bounded form on $X$, and for a modification $\pi : \widetilde{X} \rightarrow X$ such that $\pi^* s$ defines a divisor, the pullback of $|s|^{2N}\frac{\d |s|^2}{|s|^2}\wedge \omega$ is smooth for large $N$, that is, $\pi^* \Big( \frac{\d |s|^2}{|s|^2}\wedge \omega \Big)\in \mathcal{A}_{\pi^{*}s}(\widetilde{X})$. 
\end{remark}
\begin{proof}[Proof of \Cref{thm:1} (ii)]
    Let $\xi \in \mathscr{D}^{n-p,n-q}(X)$. Then $\exists N\geq 0$ such that $\|s\|^{2N}\omega$ extends smoothly across $V \cap \mathrm{supp}\,\xi$. Using integration by parts and Stokes' theorem, we have, for $\mathfrak{Re}\, \lambda \gg 0$,
    \begin{align*}
        \int\limits_{X} \|s\|^{2\lambda} \omega \wedge \d \xi &= (-1)^{p+q+1} \int\limits_{X} \d (\|s\|^{2\lambda}\omega) \wedge \xi \\
        &= (-1)^{p+q+1} \lambda \int\limits_{X} \|s\|^{2\lambda} \frac{\d \|s\|^2}{\|s\|^2} \wedge \omega \wedge \xi + (-1)^{p+q+1} \int\limits_{X} \|s\|^{2\lambda} \d \omega \wedge \xi.
    \end{align*}
    By \Cref{lem:iii}, $\d \omega, \frac{\d \|s\|^2}{\|s\|^2} \wedge \omega \in \mathcal{A}_s(X)$. Thus, by \Cref{thm:1} (i), and by uniqueness of meromorphic continuation, we obtain the following equality of Laurent series expansions about $0$,
    \[
        \sum\limits_{j=0}^{\kappa}\frac{1}{\lambda^j} \langle \d \mu_j(\omega),\xi \rangle = \sum\limits_{j=1}^{\kappa'} \frac{1}{\lambda^{j-1}} \langle \mu_j\left( \frac{\d \|s\|^2}{\|s\|^2} \wedge \omega \right), \xi \rangle + \sum\limits_{j=0}^{\kappa''} \frac{1}{\lambda^j} \langle \mu_j(\d \omega),\xi \rangle + \mathcal{O}(\lambda),
    \]
    %
    where we have used \cref{eq:dofcurrent} on the left-hand side. Collecting the terms by order in $\lambda$, we obtain the equality \cref{eq:dmu} for each $j$.
\end{proof}

\section{Proof of \Cref{thm:2}}

In this section we give the proof of our main result, \Cref{thm:2}.
\begin{proof}[Proof of \Cref{thm:2}]
    Recall that $\omega \in \mathcal{A}_s(X)$ and that $\|\cdot\|$ and $|\cdot|$ are two smooth Hermitian metrics on $E$. Let $\xi$ be a test form of complementary bidegree to $\omega$ and consider
    \[
        \Gamma(\lambda,\tau) = \int\limits_X \|s \|^{2\lambda} \left( \frac{|s|}{\|s\|}\right)^{\!\!2\tau}\!\! \omega \wedge \xi.
    \]
    By \Cref{prop:A}, $\Gamma(\lambda,\tau)$ is holomorphic if $\mathfrak{Re}\,\lambda \gg 0$ and extends to a meromorphic function on $\C^2$. Furthermore, there is a discrete subset $P \subset \Q \cap (-\infty, N]$, for some $N\geq 0$ such that the polar locus of $\Gamma(\lambda,\tau)$ lies in $P \times \C_\tau$.
    \medskip
    
    Suppose first that $X$ is smooth and that $s$ defines a normal crossings divisor. 
    Then $|s|^2/\|s\|^2$ is a smooth positive function on $X$. By \Cref{thm:1} (i), for each fixed $\tau \in \C$ and $\mathfrak{Re}\,\lambda \gg 0$, there is some $\kappa' \leq n$ such that
    \begin{equation}
        \label{eq:lambdatau1}
        \int\limits_{X} \|s\|^{2\lambda} \left( \frac{|s|^2}{\|s\|^2}\right)^{\!\!\tau} \omega \wedge \xi = \sum\limits_{j=0}^{\kappa'} \frac{1}{\lambda^j} \langle \mu_j^{\|\cdot\|}\bigg( \!\!\left( \frac{|s|^2}{\|s\|^2}\right)^{\!\!\tau}\omega \bigg),\xi \rangle + F(\lambda,\tau),
    \end{equation}
    where $\lambda \mapsto F(\lambda,\tau)$ is meromorphic in $\C_\lambda$, holomorphic  for $\lambda$ near $0$ and $F(0,\tau) = 0$. By \Cref{lem:ii2} we have that
    \begin{equation}
        \label{eq:lambdatau2}
        \sum\limits_{j=0}^{\kappa'} \frac{1}{\lambda^j} \langle \mu_j^{\|\cdot\|}\bigg( \!\!\left( \frac{|s|^2}{\|s\|^2}\right)^{\!\!\tau}\omega \bigg),\xi \rangle + F(\lambda,\tau) = \sum\limits_{j=0}^{\kappa'} \frac{1}{\lambda^j} \langle \left( \frac{|s|^2}{\|s\|^2}\right)^{\!\!\tau} \mu_j^{\|\cdot\|}(\omega),\xi \rangle + F(\lambda,\tau).
    \end{equation}
    The left hand side of \cref{eq:lambdatau1} is meromorphic by \Cref{prop:A} with polar set $P \times \C_\tau$. Each term in the sum in the right hand side of \cref{eq:lambdatau2} is meromorphic in $\C^2$ with polar set $\{0\} \times \C_\tau$. It follows that $F(\lambda,\tau)$ is meromorphic in $\C^2$ with polar set $(P \setminus \{0\}) \times \C_\tau$. On the line $\tau = \lambda$ in $\C^2$ we obtain an equality of meromorphic functions
    \[
        \int\limits_{X} |s|^{2\lambda} \omega \wedge \xi = \sum\limits_{j=0}^{\kappa'} \frac{1}{\lambda^j} \langle \left( \frac{|s|^2}{\|s\|^2}\right)^{\!\!\lambda} \mu_j^{\|\cdot\|}(\omega),\xi \rangle + F(\lambda,\lambda),
    \]
    where $F(0,0) = 0$, and $F(\lambda,\lambda)$ is holomorphic for $\lambda$ near 0. Thus, the sum on the right hand side contains the principal part of the Laurent series expansion of the left hand side around $\lambda = 0$. But, by \Cref{thm:1} (i), the Laurent series expansion of the left hand side is given by
    \[
        \sum\limits_{j=0}^\kappa \frac{1}{\lambda^j} \langle \mu_j^{|\cdot|}(\omega),\xi \rangle + \mathcal{O}(\lambda),
    \]
    for some $\kappa \leq n$. Thus, since
    \[
        \left( \frac{|s|^2}{\|s\|^2}\right)^{\!\!\lambda} = \sum\limits_{\ell=0}^\infty \frac{\lambda^\ell}{\ell!} \left( \log \frac{|s|^2}{\|s\|^2}\right)^{\!\!\ell},
    \]
    by uniqueness of Laurent series expansions we have that
    \begin{equation}
        \label{eq:comparison}
        \mu_j^{|\cdot|}(\omega) = \sum\limits_{\ell=0}^{\kappa'-j} \frac{1}{\ell!} \left( \log \frac{|s|^2}{\|s\|^2}\right)^{\!\!\ell} \mu_{j+\ell}^{\|\cdot\|}(\omega).
    \end{equation}
    It immediately follows that $\kappa' = \kappa$, that is, $\kappa$ is independent of the metric when $s$ defines a divisor, and,
    as a consequence $\mu_\kappa(\omega) := \mu_\kappa^{|\cdot|}(\omega)$ is independent of the choice of metric.
    
    \medskip
    Now, for the general case: Let $\pi:\widetilde{X} \rightarrow X$ be a modification such that $\widetilde{X}$ is smooth and $\pi^* s$ defines a normal crossings divisor. Then \cref{eq:comparison} holds with $\omega$ and $s$ replaced by $\pi^* \omega$ and $\pi^* s$, respectively. In view of \cref{eq:mupushforward}, we have that $\mu_j^{|\cdot|}(\omega) = \pi_* \mu_j^{|\cdot|}(\pi^*\omega)$ for $j=0,\hdots,\kappa$ and $\mu_j^{\|\cdot\|}(\omega) = \pi_* \mu_j^{\|\cdot\|}(\pi^*\omega)$ for $j=0,\hdots,\kappa'$. It follows that, for each $j=0,\hdots,\kappa$,
    \begin{equation}
        \label{eq:n-j}
        \begin{aligned}
            \mu_j^{|\cdot|}(\omega) &= \pi_* \sum\limits_{\ell=0}^{\kappa'-j} \frac{1}{\ell!}\left( \log \frac{|\pi^*s|^2}{\|\pi^*s\|^2}\right)^{\!\!\ell} \mu_{j+\ell}^{\|\cdot\|}(\pi^*\omega) = \sum\limits_{\ell=0}^{\kappa'-j} \frac{1}{\ell!} \left( \log\frac{|s|^2}{\|s\|^2}\right)^{\!\!\ell} \mu_{j+\ell}^{\|\cdot\|}(\omega),
        \end{aligned}
    \end{equation}
    where 
    \[
        \left( \log\frac{|s|^2}{\|s\|^2}\right)^{\!\!\ell} \mu_{j+\ell}^{\|\cdot\|}(\omega) := \pi_* \bigg( \!\!\left( \log \frac{|\pi^*s|^2}{\|\pi^*s\|^2}\right)^{\!\!\ell} \mu_{j+\ell}^{\|\cdot\|}(\pi^*\omega) \bigg)
    \]
    according to \cref{eq:generalizedproduct}. 
\end{proof}
Note that if $\kappa > \kappa'$, even though $\mu^{\|\cdot\|}_{j+\ell}(\omega) = 0$ for $\ell > \kappa' - j$, this does not immediately imply that
\[
    \left( \log\frac{|s|^2}{\|s\|^2}\right)^{\!\!\ell} \mu_{j+\ell}^{\|\cdot\|}(\omega) = 0,
\]
for $\kappa' - j < \ell \leq \kappa - j$, cf. \Cref{ex:2}. Thus, it is not clear in general whether $\kappa (\leq n)$ is independent of the choice of metric, unless $V = \{s=0\}$ is a hypersurface, in which case $\log\frac{|s|^2}{\|s\|^2}$ is smooth.
%
%

\medskip
The dependence of the currents $\mu_j(\omega)$ on the choice of section defining $V$ is, in fact, essentially described by \Cref{thm:2}, in a sense which we try to illustrate with the following example.
\begin{example}
    \label{ex:3}
    Suppose that $V$ is a hypersurface and that there are (holomorphic) line bundles $E$ and $F$ and (holomorphic) sections $s : X \rightarrow E$ and $\sigma : X \rightarrow F$ such that $V = \{s=0\} = \{\sigma=0\}$ and such that $\sigma$ and $s^{\otimes k}$ define the same divisor, for some $k \in \mathbb{N}$. Moreover, let $|\cdot|_E$ and $|\cdot|_F$ be Hermitian metrics on $E$ and $F$, respectively, and suppose that $\omega \in \mathcal{A}_s(X) = \mathcal{A}_\sigma(X)$. 

    The metric $|\cdot|_E$ naturally induces a metric $|\cdot|_{E^{\otimes k}}$ on $E^{\otimes k}$ satisfying $|s^{\otimes k}|^2_{E^{\otimes k}} = |s|_E^{2k}$. Thus, since $\sigma$ and $s^{\otimes k}$ define the same divisor, we have that
    \[
        \frac{|\sigma|^2_F}{|s^{\otimes k}|_{E^{\otimes k}}} = \frac{|\sigma|^{2}_F}{|s|^{2k}_E}
    \]
    is a smooth positive function on $X$.
    Thus, we can define a new metric $\|\cdot\|_E$ on $E$ by
    \[
        \|v\|_E^2 := |v|^2_E \frac{|\sigma|^{2/k}_F}{|s|^{2}_E},
    \]
    for $v \in H^0(X,E)$. For $\mathfrak{Re}\,\lambda \gg 0$, we then have that
    \begin{align*}
        \int\limits_X |\sigma|^{2\lambda}_F \,\omega \wedge \xi &= \int\limits_X |s^{\otimes k}|^{2\lambda}_{E^{\otimes k}} \bigg( \frac{|\sigma|^2_F}{|s^{\otimes k}|^2_{E^{\otimes k}}}\bigg)^{\!\!\lambda} \omega \wedge \xi \\
        &= \int\limits_X |s|^{2k\lambda}_{E} \bigg( \frac{|\sigma|^2_F}{|s|^{2k}_{E}}\bigg)^{\!\!\lambda} \omega \wedge \xi = \int\limits_X \|s\|_E^{2k\lambda} \omega\wedge \xi.
    \end{align*}
    Thus, we see that the change of sections, from $s$ to $\sigma$, can be realized as a change in metrics on $E$, keeping the section $s$ fixed, after a possible rescaling of $\lambda$.
    
    %
\end{example}

\section{Asymptotic expansion of $\mathcal{I}_{\|\cdot\|}(\epsilon)$}

Recall that $\mathcal{I}_{\|\cdot\|}(\epsilon)$ is given by \cref{eq:cutoff}, where $\omega \in \mathcal{A}_{s}(X)$, $\xi \in \mathscr{D}(X)$, $s$ is a holomorphic section of $E$ such that $\{s=0\} = V$ and $\|\cdot\|$ is a smooth Hermitian metric on $E$. In this section we relate the asymptotic expansion of $\mathcal{I}_{\|\cdot\|}(\epsilon)$ to the Laurent series expansion \cref{eq:gammalaurent} of $\Gamma_{\|\cdot\|}(\lambda)$.
\begin{proposition}
    \label{thm:3}
    Let $P \subset \Q$ denote the polar set of $\Gamma_{\|\cdot\|}(\lambda)$ and let $P_+ = P \cap \{ \mathfrak{Re}\,\lambda > 0\}$. We have that
    \begin{equation}
        \label{eq:asym}
        \mathcal{I}_{\|\cdot\|}(\epsilon) = \langle \mu_0^{\|\cdot\|}(\omega),\xi\rangle + \sum\limits_{j=1}^\kappa \frac{1}{j!} \big( \!\log\epsilon^{-1}\big)^{j} \langle \mu_j^{\|\cdot\|}(\omega),\xi\rangle + \!\sum\limits_{p \in P_+} \underset{\lambda = p}{\mathrm{Res}} \Big\{ \epsilon^{-\lambda} \lambda^{-1} \Gamma_{\|\cdot\|}(\lambda)\Big\} + \mathcal{O}(\epsilon^{\delta}),
    \end{equation}
    for some $\delta > 0$.
\end{proposition}
%
\medskip
As we show below,
\[
    \underset{\lambda = p}{\mathrm{Res}}\Big\{ \epsilon^{-\lambda} \lambda^{-1} \Gamma_{\|\cdot\|}(\lambda) \Big\} = \epsilon^{-p} \sum_{j=0}^{2\ell_p - 1} \frac{1}{j!} \big( \log \epsilon^{-1}\big)^j c_{2\ell_p - 1 -j}
\]
where $\ell_p \in \mathbb{N}$ and where the $c_{2\ell_p - 1 -j}$ are independent of $\epsilon$. If $V$ is a hypersurface, the existence of an asymptotic expansion of $\mathcal{I}_{\|\cdot\|}(\epsilon)$ of this form follows from \cite[Theorem 4.3.1]{BA1}. The proof of that theorem is based on \cite{BA2} and the existence of Bernstein--Sato polynomials. It is reasonable to expect that \Cref{thm:3} can be proven in a similar way. We have instead chosen to use the fact that $\mathcal{I}_{\|\cdot\|}(\epsilon)$ and $\Gamma_{\|\cdot\|}(\lambda)$ are related via the Mellin transform.

The first observation is that $\Gamma_{\|\cdot\|}(\lambda)$ satisfies a certain growth condition.
\begin{lemma}
    \label{lem:iv}
    The function $\Gamma_{\|\cdot\|}(\lambda)$ is rapidly decreasing in $\mathfrak{Im}\,\lambda$, in the sense that the product $\lambda^\ell \Gamma_{\|\cdot\|}(\lambda)$, for any $\ell \in \mathbb{N}$, is a bounded function when $\lambda = \alpha + i \beta$ and $|\beta| \rightarrow \infty$, locally uniformly in $\alpha$.
\end{lemma}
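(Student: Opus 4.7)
The plan is to build on the explicit meromorphic representation of $\Gamma_{\|\cdot\|}(\lambda)$ established during the proof of \Cref{lem:i}. By the resolution-of-singularities argument from the proof of \Cref{prop:A} combined with a partition of unity, it suffices to control local contributions of the form
\[
    I(\lambda) = \int_{\mathbb{C}^n} |z_1^{m_1}\cdots z_\kappa^{m_\kappa}|^{2(\lambda - N)} e^{-\lambda \phi}\,\Psi\,\d z \wedge \d \bar z
\]
with $\phi$ smooth and $\Psi \in \mathscr{D}(\mathbb{C}^n)$. For any $M \in \mathbb{Z}_{\geq 0}$, the identity \cref{eq:merocontM} rewrites this as
\[
    I(\lambda) = \frac{h_M(\lambda)}{\lambda^{2\kappa}} \int_{\mathbb{C}^n} |z_1^{m_1}\cdots z_\kappa^{m_\kappa}|^{2(\lambda + M)}\,P_M\bar{P}_M(e^{-\lambda\phi}\Psi)\,\d z\wedge\d\bar z,
\]
with $h_M(\lambda)$ as in \cref{eq:hMlambda}.

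I would then estimate the two factors separately along vertical lines. Because all poles of $h_M$ lie on $\mathbb{R}$, for $\lambda = \alpha + i\beta$ with $\alpha$ in a compact set avoiding those poles and $|\beta|$ large, each denominator factor satisfies $|m_i(\lambda + M) - j|^{-2} \leq (m_i|\beta|)^{-2}$, giving $|h_M(\lambda)/\lambda^{2\kappa}| \leq C_M |\beta|^{-D_M}$ for an exponent $D_M$ that grows with $M$. For the integral, Leibniz shows that $P_M \bar P_M(e^{-\lambda\phi}\Psi) = e^{-\lambda\phi}Q_M(\lambda,z)$, where $Q_M$ is a polynomial in $\lambda$ of degree at most $2(N+M)\sum_i m_i$ whose coefficients are smooth and compactly supported in $z$. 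Since $|e^{-\lambda\phi}| = e^{-\alpha\phi}$ is uniformly bounded on $\mathrm{supp}\,\Psi$ for $\alpha$ in a compact set and $|z_1^{m_1}\cdots z_\kappa^{m_\kappa}|^{2(\lambda+M)}$ is locally integrable once $M$ is sufficiently large, the integral is bounded by $C'_M(\alpha)(1+|\lambda|)^{2(N+M)\sum_i m_i}$.

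Given a target $\ell \in \mathbb{N}$, the strategy is to take $M = M(\ell)$ large enough that $D_M - \ell - 2(N+M)\sum_i m_i$ remains positive, yielding $|\lambda^\ell \Gamma_{\|\cdot\|}(\lambda)| \leq C(\ell,\alpha)$ uniformly as $|\beta| \to \infty$, locally uniformly in $\alpha$. The main obstacle is the precise accounting of $D_M$ against the polynomial growth of the integrand: the leading-order count shows both exponents equal $2(N+M)\sum_i m_i$ and so cancel exactly, and the extra margin of order $\ell$ must come either from the fixed $2\kappa$ contribution of the $\lambda^{2\kappa}$ factor in the denominator, or from a supplementary Mellin-type integration by parts based on the identity $|z_i^{m_i}|^{2(\lambda+M)} = \frac{1}{2m_i(\lambda+M)}(z_i\partial_{z_i} + \bar z_i\partial_{\bar z_i})|z_i^{m_i}|^{2(\lambda+M)}$, each application of which produces an additional factor $(\lambda+M)^{-1}$ at the expense of extra derivatives on $\Psi$ and $e^{-\lambda\phi}$. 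The delicate combinatorics needed to ensure that iterated integrations by parts strictly gain decay rather than regenerate matching $\lambda$-growth is the technically hardest part of the argument, and will be carried out by arranging the derivatives to fall on $\Psi$ (locally away from the critical set of $\phi$, via a further partition of unity) wherever a $\lambda$-factor would otherwise be produced.
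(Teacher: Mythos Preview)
Your diagnosis of the obstacle is exactly right, but the proposed cure does not work. After applying $P_M\bar P_M$ to $e^{-\lambda\phi}\Psi$ you indeed get $e^{-\lambda\phi}$ times a polynomial in $\lambda$ of degree $2(N+M)\sum_i m_i$, and this cancels \emph{exactly} against the decay of $h_M(\lambda)/\lambda^{2\kappa}$, leaving no margin. Your supplementary Mellin-type integration by parts does not help: each application of the Euler operator $z_i\partial_{z_i}+\bar z_i\partial_{\bar z_i}$ gains a factor $(\lambda+M)^{-1}$, but by Leibniz it \emph{must} hit $e^{-\lambda\phi}$ in one of the resulting terms, producing $-\lambda(z_i\partial_{z_i}\phi+\bar z_i\partial_{\bar z_i}\phi)e^{-\lambda\phi}$ and regenerating the lost factor of $\lambda$. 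There is no partition of unity that lets you ``arrange the derivatives to fall on $\Psi$'': the Leibniz expansion is what it is, and the bad term is present wherever $\d\phi\neq 0$, i.e.\ generically. Restricting to the critical set of $\phi$ is a measure-zero condition and cannot be achieved by a partition of unity in the base.

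The paper resolves this by removing the troublesome factor $e^{-\lambda\phi}$ \emph{before} integrating by parts. One performs the (non-holomorphic) change of variables $\sigma_1=e^{-\phi/2m_1}z_1$, $\sigma_\ell=z_\ell$ for $\ell\geq 2$, chosen so that $|\sigma_1^{m_1}\cdots\sigma_\kappa^{m_\kappa}|^{2\lambda}=e^{-\lambda\phi}|z_1^{m_1}\cdots z_\kappa^{m_\kappa}|^{2\lambda}$. After this substitution the local integrand is $|\sigma^{m}|^{2(\lambda-N)}\widetilde\Psi\,\d\sigma\wedge\d\bar\sigma$ with $\widetilde\Psi$ smooth, compactly supported, and \emph{independent of $\lambda$}. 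Now the integration-by-parts formula \cref{eq:merocontM} (applied in the $\sigma$-variables) yields $\frac{h_M(\lambda)}{\lambda^{2\kappa}}$ times an integral whose integrand is bounded uniformly in $\beta$, so the full decay $|\lambda|^{-2(N+M)\sum_i m_i}$ survives and $M$ can be taken arbitrarily large. This single change of variables is the missing idea in your argument.
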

The following proof is an adaptation of the proof of Lemma 6.1 in \cite{A1}. 
%
\begin{proof}
    Let $\pi : \widetilde{X} \rightarrow X$ be a modification such that $\widetilde{X}$ is smooth and $\pi^* s$ defines a normal crossings divisor. Recall that then
    \[
        \Gamma_{\|\cdot\|}(\lambda) = \int\limits_{\widetilde{X}} \|\pi^*s\|^{2\lambda} \pi^* \omega \wedge \pi^* \xi.
    \]
    Locally in $\widetilde{X}$ we can choose coordinates such that 
    $\|\pi^*s\|^{2\lambda} = |z^{m_1}_1,\hdots,z^{m_\kappa}_\kappa|^{2\lambda} e^{-\lambda\phi}$, for some $1\leq \kappa \leq n$, $m_1,\hdots,m_\kappa \geq 1$, and $\phi$ a local weight associated to $\|\cdot\|$, and
    \[
        \pi^*\omega = \frac{\Psi \,\d z\wedge \d \bar{z}}{|z^{m_1}_1,\hdots,z^{m_\kappa}_\kappa|^{2N}},
    \]
    for some smooth function $\Psi$ and some integer $N \geq 0$. By introducing a partition of unity $\{\rho_j\}$ on $\widetilde{X}$, $\Gamma_{\|\cdot\|}(\lambda)$ can be written as a finite sum of terms of the form
    \[
        \int\limits_{\C^n} |z^{m_1}_1 \cdots z^{m_{\kappa}}_\kappa|^{2(\lambda-N)} e^{-\lambda \phi} \Psi \pi^* \xi \rho_j \, \d z \wedge \d \bar{z}.
    \]
    Notice that $\kappa$, $m_1,\hdots,m_\kappa$, $N$, $\phi$ and $\Psi$ all depend on $j$.
    
    Consider the (non-holmorphic) change of variables, $\sigma_1 = e^{-\phi/2m_1} z_1$, $\sigma_\ell = z_\ell$ for $2 \leq \ell \leq n$. We have that $\d \sigma_\ell = \d z_\ell$ for $2\leq \ell \leq n$, and
    \[
        \d \sigma_1 = e^{-\phi/2m_1} \d z_1 - \frac{1}{2m_1}e^{-\phi/2m_1} z_1 \sum\limits_{\ell=1}^n  \bigg( \frac{\partial \phi}{\partial z_\ell} \d z_\ell + \frac{\partial \phi}{\partial \bar{z}_\ell} \d \bar{z}_\ell\bigg).
    \]
    It follows that
    \[
        \d \sigma \wedge \d \bar{\sigma} = e^{-\phi/m_1} \bigg( 1 - \frac{1}{m_1}\mathfrak{Re}\,z_1 \frac{\partial \phi}{\partial z_1}\bigg) \d z\wedge \d \bar{z}.
    \]
    We can take $\rho_j$ to be such that $\d \sigma \wedge \d \bar{\sigma} \neq 0$ on $\mathrm{supp}\,\rho_j$. We then have that
    \[
        \int\limits_{\C^n} |z^{m_1}_1 \cdots z^{m_{\kappa}}_\kappa|^{2(\lambda-N)} e^{-\lambda \phi} \Psi \pi^* \xi \rho_j \, \d z \wedge \d \bar{z} = \int\limits_{\C^n} |\sigma^{m_1}_1 \cdots \sigma^{m_\kappa}_\kappa|^{2(\lambda-N)} \widetilde{\Psi} \pi^* \xi \rho_j \, \d \sigma \wedge \d \bar{\sigma},
    \]
    where
    \[
        \widetilde{\Psi} = \bigg( 1 - \frac{1}{m_1}\mathfrak{Re}\,z_1 \frac{\partial \phi}{\partial z_1}\bigg)^{\!\!-1}\!\! e^{-(N-1/m_1)\phi}\Psi,
    \]
    is smooth on $\mathrm{supp}\,\rho_j$. Following the steps in the proof of \Cref{lem:i}, for any positive integer $M$ we have by \cref{eq:merocontM1} that
    \begin{align*}
        \int\limits_{\C^n} |\sigma^{m_1}_1 \cdots \sigma^{m_\kappa}_\kappa|^{2(\lambda-N)} \widetilde{\Psi} \pi^* \xi \rho_j \, \d \sigma \wedge \d \bar{\sigma} 
        &= \frac{h_M(\lambda)}{\lambda^{2\kappa}}\int\limits_{\C^n} |\sigma^{m_1}_1 \cdots \sigma^{m_\kappa}_\kappa|^{2(\lambda + M)} \times \\
        &\qquad \times P_M \bar{P}_M \big( \widetilde{\Psi} \pi^* \xi \rho_j \big) \d \sigma \wedge \d \bar{\sigma},
    \end{align*}
    where
    \[
        P_M \bar{P}_M = \frac{\partial^{2(N+M)\sum_{i=1}^\kappa m_i}}{\partial \sigma_1^{(N+M)m_1} \cdots \partial \bar{\sigma}_\kappa^{(N+M)m_\kappa} },
    \]
    and where $h_M(\lambda)$ is given by \cref{eq:hMlambda}. Notice that
    \[
        \frac{|h_M(\lambda)|}{|\lambda|^{2\kappa}} = \mathcal{O}\Big(|\lambda|^{-2(N+M)\sum_{i=1}^\kappa m_i}\Big),
    \]
    for $|\lambda|\gg 0$. For $\lambda = \alpha + i \beta$ with $\alpha > -M $, the integral
    \[
        \int\limits_{\C^n} |\sigma^{m_1}_1 \cdots \sigma^{m_\kappa}_\kappa|^{2(\lambda + M)} P_M \bar{P}_M \big( \widetilde{\Psi} \pi^* \xi \rho_j \big) \d \sigma \wedge \d \bar{\sigma}
    \]
    is finite, and it clearly remains finite if we let $|\beta| \rightarrow \infty$, locally uniformly in $\alpha$. Thus, for $|\beta| \gg 0$, $|\Gamma_{\|\cdot\|}(\alpha+i\beta)| = \mathcal{O}(|\beta|^{-2(N+M)\sum_{i=1}^\kappa m_i})$. As $M$ was chosen arbitrarily, it follows that $(\alpha+i\beta)^\ell \Gamma_{\|\cdot\|}(\alpha + i\beta)$ is a bounded function when $|\beta| \rightarrow \infty$ for any $\ell \in \mathbb{N}$.
\end{proof}
%

As mentioned, the functions $\Gamma_{\|\cdot\|}(\lambda)$ and $\mathcal{I}_{\|\cdot\|}(\epsilon)$ are related via the Mellin transform. The Mellin transform of a function $f$ defined on $\mathbb{R}_+$ is given by
\[
    \{\mathcal{M}f\}(\lambda) = \int\limits_0^\infty \epsilon^{\lambda - 1} f(\epsilon) \d \epsilon.
\]
Notice that if $|f(\epsilon)| \lesssim \epsilon^{-N}$ for some $N \geq 0$ as $\epsilon \rightarrow 0$ and $f(\epsilon)=0$ if $\epsilon \gg 0$, then $\{\mathcal{M}f \}(\lambda)$ is holomorphic for $\mathfrak{Re}\,\lambda \gg 0$.

If a function $\varphi : \C \rightarrow \C$ is holomorphic in the strip $a < \mathfrak{Re}\,\lambda < b$, and if it tends to zero uniformly as $|\mathfrak{Im}\,\lambda| \rightarrow \infty$, for $\mathfrak{Re}\,\lambda = c$, where $c \in (a,b)$, such that its integral along such a line is absolutely convergent, then $\varphi$ has an inverse Mellin transform, given by
\[
    \{\mathcal{M}^{-1}\varphi \}(\epsilon) = \frac{1}{2\pi i} \int\limits_{c- i \infty}^{c+ i \infty} \epsilon^{-\lambda} \varphi(\lambda) \d \lambda.
\]
\begin{lemma}
    \label{lem:v}
    We can find an integer $N \geq 0$ such that $|\mathcal{I}_{\|\cdot\|}(\epsilon)| \lesssim \epsilon^{-2N}$ as $\epsilon \rightarrow 0$; additionally we have that $\mathcal{I}_{\|\cdot\|}(\epsilon) = 0$ for $\epsilon \gg 0$. 
    For $\mathfrak{Re}\,\lambda \gg 0$, we have that
    \[
        \{\mathcal{M} \mathcal{I}_{\|\cdot\|}\}(\lambda) = \frac{1}{\lambda}\Gamma_{\|\cdot\|}(\lambda).
    \]
\end{lemma}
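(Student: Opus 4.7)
The plan is to verify three separate assertions: (a) the upper bound $|\mathcal{I}_{\|\cdot\|}(\epsilon)|\lesssim\epsilon^{-2N}$ as $\epsilon\to 0^+$, (b) the eventual vanishing of $\mathcal{I}_{\|\cdot\|}(\epsilon)$ for large $\epsilon$, and (c) the Mellin transform identity $\{\mathcal{M}\mathcal{I}_{\|\cdot\|}\}(\lambda)=\lambda^{-1}\Gamma_{\|\cdot\|}(\lambda)$ for $\mathfrak{Re}\,\lambda\gg 0$.

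For (a), I would invoke the defining property of $\mathcal{A}_V^{p,q}(X)$: since $\mathrm{supp}\,\xi$ is compact, there exists some integer $N\geq 0$ such that $\|s\|^{2N}\omega$ extends to a smooth form on a neighborhood of $\mathrm{supp}\,\xi$. Consequently $\|s\|^{2N}|\omega\wedge\xi|$ is bounded by a continuous compactly supported density, while on the integration region $\{\|s\|^2\geq\epsilon\}$ we have $\|s\|^{-2N}\leq\epsilon^{-N}$. This immediately gives $|\mathcal{I}_{\|\cdot\|}(\epsilon)|\lesssim\epsilon^{-N}$, which a fortiori yields the claimed $\epsilon^{-2N}$ bound. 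For (b), continuity of $\|s\|^2$ together with compactness of $\mathrm{supp}\,\xi$ provides an upper bound $M>0$ with $\|s\|^2\leq M$ on $\mathrm{supp}\,\xi$; for any $\epsilon>M$ the set $\{\|s\|^2\geq\epsilon\}\cap\mathrm{supp}\,\xi$ is empty, so the integrand vanishes identically.

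For (c), the plan is to apply Fubini's theorem to swap the order of integration in
\[
\{\mathcal{M}\mathcal{I}_{\|\cdot\|}\}(\lambda) = \int_0^\infty \epsilon^{\lambda-1}\!\!\int\limits_{\|s\|^2\geq\epsilon}\omega\wedge\xi\,\d\epsilon.
\]
The combined integration region is $\{(\epsilon,x):0<\epsilon\leq\|s\|^2(x),\ x\in\mathrm{supp}\,\xi\}$; after swapping, the inner integral becomes $\int_0^{\|s\|^2(x)}\epsilon^{\lambda-1}\,\d\epsilon = \|s\|^{2\lambda}(x)/\lambda$, so the whole expression reduces to $\lambda^{-1}\int_X\|s\|^{2\lambda}\omega\wedge\xi = \lambda^{-1}\Gamma_{\|\cdot\|}(\lambda)$.

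The only point requiring genuine care is the justification of Fubini. Absolute integrability of the joint integrand reduces to integrability of $\|s\|^{2\mathfrak{Re}\,\lambda}|\omega\wedge\xi|$ on $X$. Writing $\|s\|^{2\mathfrak{Re}\,\lambda}\omega = \|s\|^{2(\mathfrak{Re}\,\lambda - N)}(\|s\|^{2N}\omega)$ and using the same $N$ from step (a), this product is locally bounded once $\mathfrak{Re}\,\lambda\geq N$, and combined with the compactness of $\mathrm{supp}\,\xi$ we obtain the required integrability. Thus Fubini applies for $\mathfrak{Re}\,\lambda\gg 0$, and the identity follows. This is essentially the only technical step, so I do not anticipate any serious obstacle beyond bookkeeping.
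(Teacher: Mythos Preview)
Your proposal is correct and follows essentially the same approach as the paper, which likewise bounds $\|s\|^{-2N}$ on the region $\{\|s\|^2\geq\epsilon\}$ and swaps the order of integration via Fubini (without the explicit justification you supply). One minor imprecision: for general $\omega\in\mathcal{A}_V(X)$ and a fixed metric $\|\cdot\|$, the form $\|s\|^{2N}\omega$ is only guaranteed to be locally \emph{bounded} (smoothness holds for the metric witnessing membership in some $\mathcal{A}_{|s|^2}(X)$), but boundedness is all your argument actually uses.
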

%
This relation between the two regularization methods considered is well known. It frequently appears in the context of residue theory, see \cite{A1,PT,Yger}, but it has also been recognized in the context of divergent integrals, see, e.g., \cite{BA1, FK2}.
\begin{proof}
    Since $\omega \in \mathcal{A}_s(X)$ we can find an integer $N \geq 0$ such that $\omega = \widetilde{\omega}/\|s\|^{2N}$ where $\widetilde{\omega}$ is bounded on $\mathrm{supp}\,\xi$. Thus,
    \[
        |\mathcal{I}_{\|\cdot\|}(\epsilon)| \leq \int\limits_{\|s\|^2 \geq \epsilon} \frac{|\widetilde{\omega} \wedge \xi|}{\|s\|^{2N}} \lesssim \frac{1}{\epsilon^{2N}}.
    \]
    Since $\widetilde{\omega} \wedge \xi$ has compact support, $\mathcal{I}_{\|\cdot\|}(\epsilon) = 0$ for $\epsilon \gg 0$. 
    %
    %
    
    By Fubini's theorem
    \begin{align*}
        \{ \mathcal{M}\mathcal{I}_{\|\cdot\|}\}(\lambda) &= \int\limits_0^\infty \epsilon^{\lambda-1} \int \limits_{\|s\|^2 \geq \epsilon} \omega \wedge \xi \,\d \epsilon \\
        &= \int_X \int\limits_{0}^{\|s\|^2} \epsilon^{\lambda-1}\d \epsilon \, \omega \wedge \xi = \frac{1}{\lambda} \int\limits_X \|s\|^{2\lambda} \omega \wedge \xi = \frac{1}{\lambda} \Gamma_{\|\cdot\|}(\lambda),
    \end{align*}
    for $\mathfrak{Re}\,\lambda \gg 0$.
\end{proof}
By \Cref{lem:iv,lem:v} it follows that $\mathcal{I}_{\|\cdot\|}(\epsilon)$ can be recovered from $\Gamma_{\|\cdot\|}(\lambda)$ via the inverse Mellin transform as follows,
\begin{equation}
    \label{eq:invmellin}
    \mathcal{I}_{\|\cdot\|}(\epsilon) = \big\{ \mathcal{M}^{-1} \lambda^{-1} \Gamma_{\|\cdot\|}(\lambda)\big\}(\epsilon) = \frac{1}{2\pi i} \int_{c - i \infty}^{c+ i \infty} \epsilon^{-\lambda} \lambda^{-1} \Gamma_{\|\cdot\|}(\lambda) \d \lambda,
\end{equation}
for $c \gg 0$. With \cref{eq:invmellin}, we are ready to prove \Cref{thm:3}.
\begin{proof}[Proof of \Cref{thm:3}]
    It follows from \Cref{thm:1} (i) that $\epsilon^{-\lambda}\lambda^{-1} \Gamma_{\|\cdot\|}(\lambda)$ defines a meromorphic function with polar set $P$ contained in $\Q \cap (-\infty,N]$ for some $N \geq 0$. Let $\delta > 0$ such that $\Gamma_{\|\cdot\|}(\lambda)$ has no poles in the interval $[-\delta,0)$ and let $c \gg 1$ such that \cref{eq:invmellin} holds. Let $B = \{ -\delta < \mathfrak{Re}\, \lambda < c\} \subset \C$ and let $\partial B$ be the positively oriented boundary of $B$. By the Residue theorem and \Cref{lem:iv} we have that
    \begin{equation}
        \label{eq:residues}
        \frac{1}{2\pi i} \oint\limits_{\partial B} \epsilon^{-\lambda} \lambda^{-1} \Gamma_{\|\cdot\|}(\lambda) \d \lambda = \sum\limits_{p \in P \cap B} \underset{\lambda = p}{\mathrm{Res}} \Big\{ \epsilon^{-\lambda} \lambda^{-1} \Gamma_{\|\cdot\|}(\lambda) \Big\}.
    \end{equation}
    By a straightforward computation
    \begin{align*}
         \frac{1}{2\pi i} \oint\limits_{\partial B} \epsilon^{-\lambda} \lambda^{-1} \Gamma_{\|\cdot\|}(\lambda)\d \lambda &= \frac{1}{2\pi i} \int\limits_{\mathclap{c-i \infty}}^{\mathclap{c+i \infty}}\epsilon^{-\lambda} \lambda^{-1} \Gamma_{\|\cdot\|}(\lambda) \d \lambda + \mathcal{O}(\epsilon^\delta).
    \end{align*}
    Thus, by \cref{eq:invmellin,eq:residues}, it follows that
    \begin{equation}
        \label{eq:Iepsilon}
        \mathcal{I}_{\|\cdot\|}(\epsilon) = \sum\limits_{p \in P \cap B} \underset{\lambda = p}{\mathrm{Res}} \Big\{ \epsilon^{-\lambda} \lambda^{-1} \Gamma_{\|\cdot\|}(\lambda) \Big\} + \mathcal{O}(\epsilon^{\delta}).
    \end{equation}
    Let $P_+ = P \cap \{ \mathfrak{Re}\,\lambda > 0 \}$; we write
    \[
        \sum\limits_{p \in P \cap B} \underset{\lambda = p}{\mathrm{Res}} \Big\{ \epsilon^{-\lambda} \lambda^{-1} \Gamma_{\|\cdot\|}(\lambda) \Big\} = \underset{\lambda = 0}{\mathrm{Res}} \Big\{ \epsilon^{-\lambda} \lambda^{-1} \Gamma_{\|\cdot\|}(\lambda) \Big\} + \sum\limits_{p \in P_+} \underset{\lambda = p}{\mathrm{Res}} \Big\{ \epsilon^{-\lambda} \lambda^{-1} \Gamma_{\|\cdot\|}(\lambda) \Big\},
    \]
    where, by \Cref{thm:1} (i), we have that
    \begin{align*}
        \underset{\lambda = 0}{\mathrm{Res}} \Big\{ \epsilon^{-\lambda} \lambda^{-1} \Gamma_{\|\cdot\|}(\lambda) \Big\} &= \underset{\lambda = 0}{\mathrm{Res}} \Bigg\{ \sum\limits_{\ell=0}^\infty \frac{1}{\ell!} \lambda^{\ell-1} \big( \log \epsilon^{-1} \big)^\ell \bigg( \sum\limits_{j=0}^\kappa \lambda^{-j} \langle \mu_j^{\|\cdot\|}(\omega),\xi \rangle + \mathcal{O}(\lambda) \bigg) \Bigg\} \\
        &= \underset{\lambda = 0}{\mathrm{Res}} \Bigg\{ \sum\limits_{j=0}^\kappa  \langle \mu_j^{\|\cdot\|}(\omega),\xi \rangle \sum\limits_{\ell=0}^j \frac{1}{\ell!} \lambda^{\ell-j-1} \big( \log \epsilon^{-1} \big)^\ell + \mathcal{O}(1) \Bigg\} \\
        &= \sum\limits_{j=0}^\kappa  \frac{1}{j!} \big( \log \epsilon^{-1} \big)^j \langle \mu_j^{\|\cdot\|}(\omega),\xi \rangle.
    \end{align*}
    \Cref{thm:3} now follows in view of \cref{eq:Iepsilon}.
\end{proof} 
We can look more closely at the residues 
\[
    \underset{\lambda = p}{\mathrm{Res}} \Big\{ \epsilon^{-\lambda} \lambda^{-1} \Gamma_{\|\cdot\|}(\lambda) \Big\},
\]
for $p\in P_+$. Following the proof of \Cref{lem:ii} and \Cref{thm:1}, let $\pi : \widetilde{X} \rightarrow X$ be a modification such that $\widetilde{X}$ is smooth and $\pi^* s$ defines a normal crossings divisor. By introducing a partition of unity, $\Gamma_{\|\cdot\|}(\lambda)$ can be written as a finite sum of terms of the form
\[
    \frac{h(\lambda)}{\lambda^{2\kappa}}I(\lambda),
\]
where $h(\lambda)$ is given by \cref{eq:hlambda} and $I(\lambda)$ by \cref{eq:Ilambda}. Since $I(\lambda)/\lambda^{2\kappa}$ is holomorphic on $\mathfrak{Re}\,\lambda > 0$, by inspection of \cref{eq:hlambda}, we find that $(\lambda-p)^{2 \ell_p} \Gamma_{\|\cdot\|}(\lambda)$ is holomorphic in a neighborhood of $p$, where
\[
    \ell_p = \# \Big\{ (i,j) : i \in \{1,\hdots,\kappa\},\ j \in \{ 1,\hdots,N m_i -1\},\ \frac{j}{m_i} = p \Big\} \geq 1 \text{ for } p \in P_+.
\]
Thus, we have that
\begin{align*}
    \underset{\lambda = p}{\mathrm{Res}} \Big\{ \epsilon^{-\lambda} \lambda^{-1} \Gamma_{\|\cdot\|}(\lambda) \Big\} &= \underset{\lambda = p}{\mathrm{Res}} \Bigg\{ \epsilon^{-p} \sum\limits_{j=0}^{\infty} \frac{1}{j!}\big(\log\epsilon^{-1}\big)^j (\lambda-p)^{j-2\ell_p} \frac{(\lambda-p)^{2\ell_p}\Gamma_{\|\cdot\|}(\lambda)}{\lambda} \Bigg\} \\
    &= \underset{\lambda = p}{\mathrm{Res}} \Bigg\{ \epsilon^{-p} \sum\limits_{j=0}^{\infty} \frac{1}{j!}\big(\log\epsilon^{-1}\big)^j (\lambda-p)^{j-2\ell_p} \sum\limits_{k=0}^{\infty} c_k (\lambda-p)^k \Bigg\},
\end{align*}
where
\[
    c_k = \frac{1}{k!} \frac{\d^{k}}{\d \lambda^{k}} \bigg( \frac{(\lambda-p)^{2\ell_p} \Gamma_{\|\cdot\|}(\lambda)}{\lambda} \bigg) \bigg|_{\lambda = p}.
\]
We obtain
\[
    \underset{\lambda = p}{\mathrm{Res}}\Big\{ \epsilon^{-\lambda} \lambda^{-1} \Gamma_{\|\cdot\|}(\lambda) \Big\} = \epsilon^{-p} \sum_{j=0}^{2\ell_p - 1} \frac{1}{j!} \big( \log \epsilon^{-1}\big)^j c_{2\ell_p - 1 -j}.
\]
The coefficients $c_{2\ell_p - 1 -j}$ can be interpreted as the action of currents similar to $\mu^{\|\cdot\|}_j(\omega)$ on $\xi$.

\end{document}